\theoremstyle{plain}
\newtheorem{theorem}{Theorem}[section]
\newtheorem{lemma}[theorem]{Lemma}
\newtheorem{corollary}[theorem]{Corollary}
\theoremstyle{definition}
\newtheorem*{definition}{Definition}
\theoremstyle{remark}
\newtheorem{remark}[theorem]{Remark}
\newcommand{\R}{\mathbb R}
\newcommand{\la}{\mathcal{L}^{\alpha}}
\newcommand{\nl}{\nabla_{\mathcal{L}^{\alpha}}}
\newcommand{\al}{\alpha}
\newcommand{\bv}{{\mathcal B}{\mathcal V}_{\mathcal L^{\al}}(\Omega)}
\newcommand{\bu}{{\mathcal B}{\mathcal V}_{\mathcal L^{\al}}(\Omega_1)}
\newcommand{\dv}{\mathrm{{div}_{\mathcal{L}^{\alpha}}}}
\newcommand{\di}{\mathrm{\widetilde{div}_{\mathcal{L}^{\alpha}}}}
\newcommand{\da}{d\mu_\alpha}
\newcommand{\duy}{d\mu_\alpha(y)}
\newcommand{\du}{d\mu_\alpha(x)}
\begin{document}
\title[Laguerre BV spaces, Laguerre perimeter and their applications]
{Laguerre BV spaces, Laguerre perimeter and their applications}

\author[Y. Liu]{Yu Liu}
\address{Yu Liu \\ School of Mathematics and Physics \\ University of Science and Technology Beijing \\ Beijing 100083,  China}
\email{liuyu75@pku.org.cn}

\author[H. Wang]{He Wang}
\address{He Wang \\ School of Mathematics and Physics \\ University of Science and Technology Beijing \\ Beijing 100083,  China}
\email{1329008486@qq.com}
\thanks{This work was financially supported by the National Natural Science Foundation of China (No.\,11671031), the Fundamental Research Funds for the Central Universities (No.\,FRF-BR-17-004B) and
Beijing Municipal Science and Technology Project (No.\,Z17111000220000)}

\subjclass[2020]{Primary 26A45, 46E35, 33C45}
\keywords{BV space, perimeter, isoperimetric inequality, Sobolev inequality, Laguerre operator.}

\begin{abstract}
      In this paper, we introduce the Laguerre bounded
      variation space   and the  Laguerre perimeter, thereby investigating
      their properties. Moreover, we prove the isoperimetric inequality
      and the Sobolev inequality in the Laguerre setting. As applications,
      we derive the mean curvature for the Laguerre perimeter.
\end{abstract}

\maketitle

\section{Introduction}
The spaces BV of functions of bounded variation in Euclidean spaces
have been  a classical setting now where several problems, mainly
(but not exclusively) of variational nature, find their nature
framework. For instance, when working with minimization problems,
reflexivity or the weak compactness property of the function space
$W^{1,p}(\R^{d})$ for $p>1$, the space  BV usually plays an
important role. For the case of the space $W^{1,1}(\R^{d})$, one
possible way to deal with this lack of reflexivity is to consider
the space $BV(\R^{d})$. However, the importance of generalizing the
classical notion of variation has been pointed out in several
occasions by E. De. Giorgi in \cite{EDG1}. Recently,  Huang, Li and
Liu in \cite{HLL} investigate the capacity and perimeters derived
from $\al$-Hermite bounded variation. In a general framework of
strictly local Dirichlet spaces with doubling measure, Alonso-Ruiz,
Baudoin  and  Chen et al. in \cite{PFL2} introduce the
class of bounded variation functions and proved the Sobolev
inequality under the Bakry-$\acute{\text{E}}$mery curvature type
condition. For further information on this topic, we refer the
reader to \cite{HLL1,GDP,PLa} and the references therein.

One of the aims of this paper is intended to discuss several basic
questions of geometric measure theory related to the Laguerre
operator in Laguerre BV spaces. At first, we will present a very
short introduction to the Laguerre operator.

Given a multiindex $\alpha=(\alpha_1,\cdots,\alpha_d)$, $\alpha\in
(-1,\infty)^d$, the Laguerre differential operator is defined by:
\begin{equation*}
\mathcal{L}^{\alpha}=-\sum_{i=1}^{d}\Big[x_i\frac{\partial^2}{\partial {x_i}^2}+(\alpha_i+1-x_i)\frac{\partial}{\partial{x_i}}\Big].
\end{equation*}
Consider the probabilistic gamma measure $\mu_{\alpha}$ in $\mathbb{R}_{+}^{d}=(0,\infty)^{d}$ given by
\begin{equation*}
d\mu_{\alpha}(x)=\prod_{i=1}^{d}\frac{{x_i}^{\alpha_i}e^{-x_i}}{\Gamma(\alpha_i+1)}dx:=\omega(x)dx.
\end{equation*}
It is well-known that $\la$ is positive and symmetric in $L^2(\mathbb{R}_{+}^{d},d\mu_{\alpha})$. Moreover, $\mathcal{L}^{\alpha}$ has a closure which is selfadjoint in $L^2(\mathbb{R}_{+}^{d},d\mu_{\alpha})$ and which also will be denoted by $\mathcal{L}^{\alpha}$.
We define the $i$-th partial derivative associated with $\mathcal{L}^{\alpha}$ by
\begin{equation*}
\delta_{i}=\sqrt{x_i}\frac{\partial}{\partial{x_i}},
\end{equation*}
see \cite{GIT} or \cite{Graczyk}. One of the motivations of such
definition is that
\begin{equation*}
\mathcal{L}^{\alpha}=\sum_{i=1}^{d}\delta_{i}^{*}\delta_{i},
\end{equation*}
where
\begin{equation*}
\delta_{i}^{*}=-\sqrt{x_i}\Big(\partial{x_i}+\frac{\alpha_i+\frac{1}{2}-x_i}{x_i}\Big)
\end{equation*}
is the formal disjoint of $\delta_i$ in
$L^2(\mathbb{R}_{+}^{d},\mathrm{d}\mu_{\alpha})$. Throughout this
paper, we always assume that $\Omega \subset \mathbb R_{+}^{d}$ be an open set. For
$u \in C^1(\R_{+}^d)$ and $\varphi=(\varphi_1, \varphi_2, \ldots,
\varphi_d) \in C^1(\R_{+}^d,\mathbb{R}^{d})$, we introduce the
following ${\mathcal L}^{\alpha}$-gradient operator and ${\mathcal
L}^{\alpha}$-divergence operator associated to
$\mathcal{L}^{\alpha}$:
$$\left\{
\begin{aligned}
\nabla_{\mathcal L^{\alpha}}u&:=(\delta_{1}u, \ldots,
\delta_{d}u),\\
\mathrm{div}_{\mathcal{L}^{\alpha}}\varphi&:=\delta^{*}_{1}{\varphi_1} + \delta^{*}_{2}{\varphi_2}+\cdots+\delta^{*}_{d}{\varphi_d},
\end{aligned}
\right.$$
which also gives
\begin{equation*}
\la u=\mathrm{div_{\la}}(\nl u)=-\sum_{i=1}^{d}\Big[x_i\frac{\partial^2}{\partial {x_i}^2}+(\alpha_i+1-x_i)\frac{\partial}{\partial{x_i}}\Big].
\end{equation*}

Naturally, we use $\bv$ to represent the class of all functions with
the Laguerre bounded variation ($\la$-BV in short) on $\Omega$, as a
continuation of \cite{HLL}, the goal of this paper is to consider
some related topics for the Laguerre setting, and the plan of the
notes is as follows. Section \ref{sub2.1} contains some basic facts
and notations needed in the sequel, the lower semicontinuity (Lemma
\ref{lem 1}), the completeness (Lemma \ref{complete}), the structure
theorem (Theorem \ref{Lemma 2.1.}) and approximation via
$C_{c}^{\infty}$-functions (Theorem \ref{th2.5}). It should be noted
that in contrast with Theorem 2 in \cite[Section 5.2.2]{EG}, we need
to use the mean value theorem of multivariate functions and the
intrinsic nature of the Laguerre variation. Section \ref{sub2.2} is
devoted to the perimeter $P_{\la}(\cdot,\Omega)$ induced by $\bv$,
see (\ref{def-hp}) below.

Recall that the classical perimeter of $E\subseteq \mathbb{R}^d$ is
defined by
\begin{equation*}
    P(E)=\sup_{\varphi\in\mathcal{F}(\R^d)}\left\{\int_{E}\mathrm{div}\varphi(x)dx\right\},
\end{equation*}
where $\mathcal{F}(\R^d)$ denotes the class of all functions
\begin{equation*}
    \varphi=(\varphi_{1},\cdots,\varphi_{d})\in C_c^{1}(\R^d,\R^d)
\end{equation*}
satisfying
\begin{equation*}
    \|\varphi\|_{\infty}=\sup_{x\in E}\Big\{(|\varphi_{1}(x)|^2+\cdots+|\varphi_{d}(x)|^2)^{\frac{1}{2}}\Big\}\le 1.
\end{equation*}
An elementary property of $P(E)$ is
\begin{equation}\label{eq0.1}
P(E)=P(E^c),\ \forall E\subset\R^d.
\end{equation}
In Lemma \ref{lem-5.1}, we proved that (\ref{eq0.1}) is valid for the
Laguerre perimeter $P_{\la}(\cdot)$. In section \ref{subsec3.3}, we obtain a coarea formula for $\la$-BV
functions. As an application, we deduce that the Sobolev type
inequality
\begin{equation}\label{eq2}
\|f\|_{L^{\frac{d}{d-1}}(\Omega_1, d\mu_{\alpha})}\lesssim |\nl
f|(\Omega_1)
\end{equation}
is equivalent to the following isoperimetric inequality
\begin{equation*}
{\mu_{\al}(E)}^{\frac{d-1}{d}}\lesssim
P_{\la}(E,\Omega_1),
\end{equation*}
see Theorem \ref{thm2.7}. We point out that, in the proof of
$(\ref{eq2})$, the inequality $|\nabla f(x)|(\Omega_1)\lesssim|\nl
f(x)|(\Omega_1)$ holds true. With this in mind, we consider the
subset
\begin{equation}\label{eq3}\Omega_{1}=\Omega\setminus \{x\in\mathbb{R}_{+}^d:\exists i\in
1,\cdots,d\ \mathrm{such}\ \mathrm{that}\
\sqrt{x_i}<1\}\end{equation} of $\Omega$ which is a reasonable
substitute of $\Omega$ and whose figure is given as follows:
%\begin{figure}[H]
%    \includegraphics[scale=0.38]{Picture.PNG}
%   \centering
%    \caption{Set for the Sobolev inequality in the Laguerre setting on $\R_{+}^2$}
%\end{figure}

Our motivation comes not only from the fact that these objects are
interesting on their own, but also from the possibility of their
potential applications in further research concerning the Laguerre
operator. Consequently, in Section \ref{sec-5}, we want to
investigate the Laguerre mean curvature of a set with finite
Laguerre perimeter. For the special case, i.e., the Laplace operator
$\Delta$, sets of finite perimeter were introduced by E. De Giorgi
in the 1950s, and were applied to the research on some classical
problems of the calculus of variations, such as the Plateau problem
and the isoperimetric problem, see \cite{GCP}, \cite{Giu} and
\cite{Mas}. Barozzi-Gonzalez-Tamanini \cite{Barozzi} proved that
every set $E$ of finite classical perimeter $P(E)$ in $\mathbb
R^{d}$ has mean curvature in $L^1(\mathbb R^d)$. A natural question
is that if the result of \cite{Barozzi} holds for
$P_{\la}(E,\Omega)$, $\alpha\in (-1,\infty)^{d}$. We point out that,
in the proof of main theorem of \cite{Barozzi}, the identity
(\ref{eq0.1}) is required. In Theorem \ref{thm5-1}, we generalize
the result of \cite{Barozzi} to ${P}_{\la}(\cdot,\Omega_1)$ and
prove that every set $E$ with ${P}_{\la}(E,\Omega_1)<\infty$ in
$\Omega_1$ has mean curvature in $L^{1}(\Omega_1,d\mu_{\al})$.

Throughout this article, we will use  $c$ and  $C$ to denote
positive constants, which are independent of main parameters and may
be different at each occurrence.
${\mathsf U}\approx{\mathsf V}$ indicates that
there is a constant $c>0$ such that $c^{-1}{\mathsf V}\le{\mathsf
U}\le c{\mathsf V}$, whose right inequality is also written as
${\mathsf U}\lesssim{\mathsf V}$. Similarly, one writes ${\mathsf V}
\gtrsim{\mathsf U}$ for ${\mathsf V}\ge c{\mathsf U}$.
For convenience, the positive constant $C$
may change from one line to another and this usually depends on the
spatial dimension $d$ and other fixed parameters.

\section{$\la$-BV functions}
\subsection{Fundamentals of $\la$-BV Space}\label{sub2.1}
In this section, we introduce the $\la$-BV space, i.e. the class of
all functions with the Laguerre bounded variation and investigate
its properties. The Laguerre variation ($\la$-variation in short) of
$f \in {L^1}(\Omega,d\mu_{\alpha})$ is defined by
\begin{equation*}
    |\nabla_{\la}f|(\Omega)=
    \mathop{\sup}\limits_{\varphi \in \mathcal F(\Omega)}\left\{
    \int_\Omega f(x)\dv\varphi(x)\du
    \right\},
\end{equation*}
where ${\mathcal F}(\Omega)$ denotes the class of all
functions \[\varphi=(\varphi_1,\varphi _2,\ldots
,\varphi_d) \in C_c^1(\Omega,\mathbb R^d)\] satisfying
\[ \|\varphi\|_{L^{\infty}}=\mathop{\sup}\limits_{x \in \Omega}
\Big\{(|\varphi _1(x)|^2 + \ldots + |\varphi
_d(x)|^2)^{\frac{1}{2}}\Big\} \le 1.\]

An function $f \in {L^1}(\Omega,d\mu_{\alpha})$ is said to have the $\la$-bounded variation on $\Omega$ if
$$|\nl f|(\Omega)<\infty,$$
and the collection of all such functions is
denoted by $\bv$, which is a Banach spaces with the norm
\[\|f\|_{\bv} = \|f\|_{L^{1}(\Omega,d\mu_{\al})}+|\nl f|
 (\Omega).\]

\begin{definition}\label{def-Sobolev}
    Suppose $\Omega$ is an open set in $\mathbb{R}_{+}^{d}$.
    Let $1 \le p \le \infty$. The Sobolev space $W_{\la}^{k,p}(\Omega)$ associated with $\la$ is defined as the set of all functions $f \in {L^p}(\Omega,d\mu_{\al})$ such that
    \[\delta_{j_1} \ldots  \delta_{j_m}f\in {L^p}(\Omega,\da),\ 1
    \le {j_1}, \ldots ,{j_m}\le d,\ 1\le m \le k.\]
    The norm of $f\in
    W_{\la}^{k,p}(\Omega)$ is given by
    \[\|f\|_{W_{\la}^{k,p}}:=\sum\limits_{1\le{j_1}
    \ldots {j_m} \le d,\ 1\le m \le k} {\|{\delta_{j_1}} \ldots
    \delta_{j_m}f\|}_{L^p(\Omega,d\mu_{\al})} + \|f\|_{L^p(\Omega,d\mu_{\alpha})}.\]
\end{definition}
In what follows, we will collect some properties of the space
$\mathcal{BV}_{\la}(\Omega)$.
\begin{lemma}\label{lem 1}
    \item{\rm (i)} Suppose $f \in W_{\la}^{1,1}
    (\Omega)$, then
    \begin{equation*}
    |\nabla_{\la}f|(\Omega)=\int_\Omega
    |\nabla_{\la}f(x)|\du,
    \end{equation*}which implies $W_{\la}^{1,1}
    (\Omega)\subseteq \mathcal{BV}_{\la}(\Omega).$
    \item{\rm (ii)} (Lower semicontinuity). Suppose $f_k \in \bv,\ k \in \mathbb N$ and
    $f_k \to f$ in $L_{loc}^1(\Omega,\da)$, then
    \begin{equation*}
    |\nabla
    _{\la}f|(\Omega ) \le \mathop{\lim\inf}\limits_{k \to \infty }|\nabla_{\la}f_k
    |(\Omega).
    \end{equation*}
\end{lemma}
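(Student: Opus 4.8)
\emph{Proof proposal.} The whole statement rests on one identity: for every $\varphi\in\mathcal F(\Omega)$, which is compactly supported inside $\R_{+}^{d}$, and every $f$ for which the relevant $\la$-derivatives exist in $L^{1}$, integration by parts (equivalently, the definition of the weak derivatives $\delta_{i}f$, using that $\delta_{i}^{*}$ is the formal adjoint of $\delta_{i}$ in $L^{2}(\R_{+}^{d},\da)$) gives
\[
\int_{\Omega}f(x)\,\dv\varphi(x)\,\du=\sum_{i=1}^{d}\int_{\Omega}\delta_{i}f(x)\,\varphi_{i}(x)\,\du=\int_{\Omega}\nl f(x)\cdot\varphi(x)\,\du .
\]
For (i), from this identity, the Cauchy--Schwarz inequality, and $\|\varphi\|_{L^{\infty}}\le 1$ we get $\int_{\Omega}f\,\dv\varphi\,\du\le\int_{\Omega}|\nl f(x)|\,\du$ for every admissible $\varphi$; taking the supremum over $\mathcal F(\Omega)$ yields $|\nl f|(\Omega)\le\int_{\Omega}|\nl f(x)|\,\du<\infty$, which in particular proves $W^{1,1}_{\la}(\Omega)\subseteq\bv$.

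For the reverse inequality in (i) I would produce near-optimal test fields. Set $g:=\nl f/|\nl f|$ on $\{\nl f\neq 0\}$ and $g:=0$ elsewhere, so $g$ is Borel with $|g|\le 1$ and $\int_{\Omega}\nl f\cdot g\,\du=\int_{\Omega}|\nl f(x)|\,\du$. Since $\nl f\in L^{1}(\Omega,\da;\R^{d})$ and $C_{c}^{1}(\Omega,\R^{d})$ is dense there, choose $h_{n}\in C_{c}^{1}(\Omega,\R^{d})$ with $h_{n}\to\nl f$ in $L^{1}$, pass to a subsequence with $h_{n}\to\nl f$ $\da$-a.e., and put $\varphi^{(n)}:=h_{n}/\sqrt{|h_{n}|^{2}+n^{-1}}\in\mathcal F(\Omega)$. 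Then $\nl f\cdot\varphi^{(n)}\to\nl f\cdot g$ $\da$-a.e.\ (trivially on $\{\nl f=0\}$, and on $\{\nl f\neq0\}$ because $h_{n}$ is eventually bounded away from $0$ there), and $|\nl f\cdot\varphi^{(n)}|\le|\nl f|\in L^{1}$; since $\mu_{\alpha}$ is a probability measure, dominated convergence gives $\int_{\Omega}\nl f\cdot\varphi^{(n)}\,\du\to\int_{\Omega}|\nl f(x)|\,\du$. Combined with the identity above this yields $|\nl f|(\Omega)\ge\int_{\Omega}|\nl f(x)|\,\du$, completing (i).

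For (ii), fix $\varphi\in\mathcal F(\Omega)$ and let $K=\operatorname{supp}\varphi$, a compact subset of $\R_{+}^{d}$. On $K$ the functions $\sqrt{x_i}$ and $\tfrac{\alpha_i+1/2-x_i}{x_i}$ are bounded (as $K$ stays away from $\partial\R_{+}^{d}$), hence $\dv\varphi\in C_{c}(\Omega)$ is bounded. Therefore $f_{k}\to f$ in $L^{1}_{loc}(\Omega,\da)$ gives
\[
\int_{\Omega}f(x)\,\dv\varphi(x)\,\du=\lim_{k\to\infty}\int_{\Omega}f_{k}(x)\,\dv\varphi(x)\,\du\le\liminf_{k\to\infty}|\nl f_{k}|(\Omega),
\]
and taking the supremum over $\varphi\in\mathcal F(\Omega)$ yields $|\nl f|(\Omega)\le\liminf_{k\to\infty}|\nl f_{k}|(\Omega)$.

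The only genuine work is the reverse inequality in (i): constructing the admissible vector fields that converge to $\nl f/|\nl f|$ while staying in $C_{c}^{1}$ with sup-norm $\le 1$; everything else (the easy half of (i) and all of (ii)) reduces to the integration-by-parts identity together with the observation that admissible $\varphi$ are supported on compact subsets of $\R_{+}^{d}$, where the first-order coefficients of $\delta_{i}^{*}$ are bounded.
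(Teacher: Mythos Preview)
Your proof is correct and follows essentially the same strategy as the paper: both halves of (i) hinge on the integration-by-parts identity $\int_{\Omega}f\,\dv\varphi\,\du=\int_{\Omega}\nl f\cdot\varphi\,\du$, and (ii) is handled by fixing a test field, passing to the limit, then taking the supremum. The only difference is that for the reverse inequality in (i) the paper simply asserts the existence of smooth fields $\varphi_{n}\in\mathcal F(\Omega)$ converging pointwise to $\nl f/|\nl f|$, whereas you build them explicitly via the normalization $\varphi^{(n)}=h_{n}/\sqrt{|h_{n}|^{2}+n^{-1}}$ of an $L^{1}$-approximating sequence; this is a welcome clarification rather than a different argument. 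Your aside that $\mu_{\alpha}$ is a probability measure is unnecessary for dominated convergence (the dominant $|\nl f|\in L^{1}$ already suffices), but it does no harm.
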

\begin{proof}
(i) For every $\varphi\in C_c^{1}(\Omega,\mathbb R^{d})$ with
$\|\varphi\|_{L^\infty(\Omega)} \le 1$, we have
\begin{align*}
    \left|\int_{\Omega}f(x) \mathrm{div}_{\la
    } \varphi(x) \du \right|
    & = \left|\int_{\Omega} \nabla_{\la} f(x)\cdot \varphi(x)
    \du \right|\leq \int_{\Omega}|\nabla_{\la} f(x)| \du.
\end{align*}
By taking the supremum over $\varphi$, it is obvious that
$$|\nabla_{\la}f|(\Omega) \le \int_\Omega {|
\nabla_{\la}f(x)|\du}.$$

Define $\varphi\in L^\infty(\Omega,\mathbb R^{d})$ as follows:
\begin{equation}\label{eqq4}
    \varphi(x):=
    \left\{\begin{aligned}
    \frac{\nabla_{\la} f(x)}{|\nabla_{\la} f(x)|},\ &
    \text{if $x\in\Omega$ and $\nabla_{\la} f(x)\neq 0$,} \\
    0,\ & \text{otherwise.}
    \end{aligned}\right.
\end{equation}
It is easy to see that $\|\varphi\|_{L^\infty(\Omega)} \le 1$.
Moreover, we can obtain the approximating smooth fields ${\varphi
_n}:= (\varphi_{n,1}, \ldots, \varphi_{n,d})$ such that
$\varphi_n\to \varphi$ pointwise as $n \to \infty$, with
$\|\varphi_n\|_{L^\infty(\Omega)} \leq 1$ for all $n \in \mathbb N$.
Combining the definition of $|\nabla_{\la}f|(\Omega)$ with
integration by parts derives that for every $n\geq 1$,
\begin{align*}
    |\nabla_{\la}f|(\Omega)
    &\ge\int_\Omega f(x)\mathrm{div}_{\la}{\varphi_n}(x)\du\\
    &=\int_\Omega f(x)(\delta_1^{*}\varphi_{n,1}(x)+\cdots+\delta_d^{*}\varphi_{n,d}(x))\du\\
    &=\int_\Omega \nabla_{\la}f(x)\cdot{\varphi_n}(x)\du.
\end{align*}
Using the dominated convergence theorem and the definition of
$\varphi$ in (\ref{eqq4}), we have
\[|\nabla_{\la}f|(\Omega) \ge \int_\Omega
{|\nabla_{\la}f(x)|\du}\] by letting $n \to \infty$.

(ii) Fix $\varphi\in C^1_c(\Omega,\mathbb R^{d})$ with $\|\varphi
\|_{L^\infty(\Omega )} \le 1$. We use the definition of
$|\nabla_{\la}f_k|(\Omega)$ to obtain
\[|\nabla_{\la}{f_k}|(\Omega) \ge \int_\Omega{f_k}(x)\mathrm{div}_{\la}\varphi(x)\du
.\] The convergence of $\{f_k\}_{k \in \mathbb{N}}$ in
$L_{loc}^1(\Omega,\da)$ to $f$   implies that
\[\mathop{\lim\inf}\limits_{k \to \infty }|\nabla
_{\la}{f_k}|(\Omega) \ge \int_\Omega f(x)\mathrm{div}_{\la}\varphi(x)\du.\]
Therefore, (ii) can be proved by the definition of $|\nabla_
{\la}f|(\Omega)$ and the arbitrariness of such
functions $\varphi$.
\end{proof}
\begin{lemma}\label{complete}
    The space $\big(\bv,\left \|\cdot
    \right\|_{\bv}\big)$ is a Banach space.
\end{lemma}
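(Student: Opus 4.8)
The plan is to follow the classical template for completeness of $\mathrm{BV}$-type spaces, with the lower semicontinuity in Lemma \ref{lem 1}(ii) as the crucial input. First I would record that $\|\cdot\|_{\bv}$ is genuinely a norm and that $\bv$ is a vector space. Homogeneity of $f\mapsto|\nl f|(\Omega)$ is immediate from the definition, and subadditivity follows because for each fixed $\varphi\in\mathcal F(\Omega)$ the map $f\mapsto\int_\Omega f\,\dv\varphi\,\du$ is linear, so $\int_\Omega (f+g)\,\dv\varphi\,\du\le|\nl f|(\Omega)+|\nl g|(\Omega)$, and one takes the supremum over $\varphi$; the same observation shows that $f+g\in\bv$ whenever $f,g\in\bv$. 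Since $\|f\|_{\bv}\ge\|f\|_{L^1(\Omega,\da)}$, the equality $\|f\|_{\bv}=0$ forces $f=0$ $\mu_\alpha$-a.e., so $\|\cdot\|_{\bv}$ is a norm.

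Next, let $\{f_k\}_{k\in\mathbb N}$ be Cauchy in $\bv$. It is then Cauchy in $L^1(\Omega,\da)$, which is complete, so there is $f\in L^1(\Omega,\da)$ with $f_k\to f$ in $L^1(\Omega,\da)$, hence also in $L^1_{loc}(\Omega,\da)$. The numerical sequence $\{|\nl f_k|(\Omega)\}_k$ is Cauchy in $\mathbb R$, hence bounded, and Lemma \ref{lem 1}(ii) gives $|\nl f|(\Omega)\le\liminf_{k\to\infty}|\nl f_k|(\Omega)<\infty$, i.e. $f\in\bv$.

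It remains to prove $\|f_k-f\|_{\bv}\to0$. The $L^1$ part tends to $0$ by construction, so only the variation term needs work. Fix $\eps>0$ and choose $N$ so that $|\nl(f_k-f_l)|(\Omega)<\eps$ for all $k,l\ge N$. Fixing $k\ge N$ and letting $l\to\infty$, we have $f_l-f_k\to f-f_k$ in $L^1_{loc}(\Omega,\da)$, so another application of Lemma \ref{lem 1}(ii) yields $|\nl(f-f_k)|(\Omega)\le\liminf_{l\to\infty}|\nl(f_l-f_k)|(\Omega)\le\eps$. Hence $\limsup_{k\to\infty}\|f_k-f\|_{\bv}\le\eps$, and since $\eps>0$ is arbitrary, $f_k\to f$ in $\bv$.

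I expect the only delicate point to be the bookkeeping in the last step: one must apply lower semicontinuity to the shifted sequence $\{f_l-f_k\}_l$ rather than to $\{f_l\}_l$, which is legitimate precisely because $\bv$ is a vector space and the differences $f_l-f_k$ converge to $f-f_k$ in $L^1_{loc}$ — both facts already secured above. Everything else is routine once subadditivity of the $\la$-variation has been noted.
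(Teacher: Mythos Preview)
Your proof is correct and follows essentially the same route as the paper: obtain the $L^1$ limit $f$ from completeness of $L^1(\Omega,\da)$, then apply the lower semicontinuity of Lemma~\ref{lem 1}(ii) to the shifted sequence $f_l-f_k\to f-f_k$ to force $|\nl(f-f_k)|(\Omega)\le\eps$. You supply slightly more detail than the paper does (the norm axioms, and the separate verification that $f\in\bv$), but the core argument is identical.
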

\begin{proof}
It is easy to check that $ \left \|\cdot   \right \|_{BV_{k}
(\Omega )}  $ is a norm and we omit the details. In  what follows,
we   prove the completeness of  $\bv$. Let $ \left \{f_{n} \right
\}_{n\in \mathbb{N}} \subset \bv$ be a Cauchy sequence, namely, for
every $\varepsilon>0$, there exists $n_{0}\in \mathbb{N}$ such that
$\forall n, m\ge n_{0}$, we have$$ \left|\nabla
_{{\la}}\left(f_{m}-f_{n}\right)\right |\left(\Omega\right
)<\varepsilon.$$ Especially, $\left\{f_{n}\right\}_{n\in
\mathbb{N}}$ is a Cauchy sequence in the Banach space
$$\big(L^{1}(\Omega,\da), \|\cdot\|_{L^{1}(\Omega, \da)}\big),$$ which
implies that there exists $f\in L^{1}\left(\Omega, \da\right)$ with
$\left \|f_{n}-f\right\|_{L^{1}\left (\Omega, \da\right)} \to 0$ as
$n\to \infty$. Hence, via Lemma \ref{lem 1}(ii), we have
\begin{align*}
    \left|\nabla_{\la}(f-f_{m})\right| (\Omega)\le\liminf_{n}\left|\nabla_{\la} (f_{m}-f_{n})\right| (\Omega)\le\varepsilon,\ (\forall m\ge n_{0})
\end{align*}
which implies that  $\left |\nabla_{\la}(f_{m} -f)\right|\left
(\Omega \right)\to 0$ as $ m\to\infty$. This completes the proof.
\end{proof}

The following lemma gives the structure theorem for $\la$-BV functions and it can be proved by the Hahn-Banach theorem and the Riesz representation theorem.
\begin{lemma}(Structure Theorem for $\mathcal{BV}_{\la}$ functions).\label{Lemma 2.1.} Let $f\in \bv$. Then there exists a Radon measure $\mu_{\la}$ on $\Omega$ such that
\begin{equation*}
    \int_{\Omega } f(x) \mathrm{div}_{\la}\varphi(x)\du
    =\int_{\Omega }\varphi(x)\cdot d\mu_{\la}(x)
\end{equation*}
for every $\varphi \in C_{c}^{\infty} (\Omega,\mathbb{R}^{d})$
and
\begin{equation*}
     |\nabla_{\la}f|(\Omega)=|\mu_{\la}|(\Omega),
\end{equation*}
where $|\mu_{\la}|$ is the total variation of the measure $\mu_{\la}$.
\end{lemma}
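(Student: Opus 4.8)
The plan is to represent the linear form $\varphi\mapsto\int_\Omega f(x)\,\dv\varphi(x)\,\du$ by integration against a vector-valued Radon measure, following the classical argument for Euclidean $BV$ (cf. \cite[Section 5.1]{EG}); the only genuinely new point is that the coefficients of $\dv$ are singular only on the coordinate hyperplanes $\{x_i=0\}$, which do not meet the support of any admissible test field since $\Omega\subset\mathbb R_+^d$. First I would define $L\colon C_c^\infty(\Omega,\mathbb R^d)\to\mathbb R$ by
\[
L(\varphi):=\int_\Omega f(x)\,\dv\varphi(x)\,\du .
\]
For $\varphi\in C_c^\infty(\Omega,\mathbb R^d)$ supported in a compact $K\subset\Omega$, each $\delta_i^{*}\varphi_i$ is continuous with support in $K$ (the multipliers $\sqrt{x_i}$ and $(\alpha_i+\tfrac12-x_i)/x_i$ being smooth and bounded on $K$), so $\dv\varphi\in C_c(\Omega)$; combined with $f\in L^1(\Omega,\da)$ this shows that $L$ is a well-defined linear functional.

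Next I would pin down the norm of $L$. If $\varphi\neq 0$, then $\varphi/\|\varphi\|_{L^\infty}\in\mathcal F(\Omega)$, so applying the definition of $|\nl f|(\Omega)$ to $\pm\varphi/\|\varphi\|_{L^\infty}$ (the set $\mathcal F(\Omega)$ being symmetric) gives
\[
|L(\varphi)|\le\|\varphi\|_{L^\infty}\,|\nl f|(\Omega).
\]
Hence $L$ is a bounded linear functional on $C_c^\infty(\Omega,\mathbb R^d)$, viewed as a subspace of the Banach space $C_0(\Omega,\mathbb R^d)$, with $\|L\|\le|\nl f|(\Omega)$. For the reverse inequality I would use mollification: given $\psi\in\mathcal F(\Omega)$ and $\varepsilon<\mathrm{dist}(\mathrm{supp}\,\psi,\partial\Omega)$, the componentwise mollification $\psi_\varepsilon=\psi*\rho_\varepsilon$ lies in $C_c^\infty(\Omega,\mathbb R^d)$ with $\|\psi_\varepsilon\|_{L^\infty}\le 1$ and $\psi_\varepsilon\to\psi$ in $C^1$ on a fixed compact neighbourhood of $\mathrm{supp}\,\psi$; since the multipliers above are bounded there, $\dv\psi_\varepsilon\to\dv\psi$ uniformly, whence $L(\psi_\varepsilon)\to L(\psi)$. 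Taking the supremum over $\psi\in\mathcal F(\Omega)$ yields $\|L\|=|\nl f|(\Omega)$.

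Then, by the Hahn--Banach theorem I would extend $L$ to $\widetilde L\in C_0(\Omega,\mathbb R^d)^{*}$ with $\|\widetilde L\|=\|L\|=|\nl f|(\Omega)$, and apply the Riesz representation theorem: there exist a finite nonnegative Radon measure $\nu$ on $\Omega$ and a Borel map $\sigma\colon\Omega\to\mathbb R^d$ with $|\sigma|=1$ $\nu$-a.e.\ such that $\widetilde L(\varphi)=\int_\Omega\varphi(x)\cdot\sigma(x)\,d\nu(x)$ for all $\varphi\in C_0(\Omega,\mathbb R^d)$ and $\nu(\Omega)=\|\widetilde L\|$. Setting $\mu_{\la}:=\sigma\,\nu$ (so that $|\mu_{\la}|=\nu$) and restricting to $\varphi\in C_c^\infty(\Omega,\mathbb R^d)$, where $\widetilde L(\varphi)=L(\varphi)$, one obtains
\[
\int_\Omega f(x)\,\dv\varphi(x)\,\du=\int_\Omega\varphi(x)\cdot d\mu_{\la}(x),
\qquad |\nl f|(\Omega)=\nu(\Omega)=|\mu_{\la}|(\Omega),
\]
which is exactly the assertion of the lemma.

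The step I expect to require the most care is the identity $\|L\|=|\nl f|(\Omega)$, i.e.\ verifying that passing from $C_c^1$ test fields (as in the definition of the $\la$-variation) to $C_c^\infty$ ones does not decrease the supremum; this is the mollification argument above, and it is also what makes it legitimate to regard $L$ as a bounded functional on $C_0(\Omega,\mathbb R^d)$. Everything else is a faithful transcription of the Euclidean structure theorem, valid here precisely because admissible fields are supported away from $\{x_i=0\}$, where the coefficients of $\dv$ degenerate.
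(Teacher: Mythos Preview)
Your argument is correct and follows essentially the same route as the paper: define the linear functional $\varphi\mapsto\int_\Omega f\,\dv\varphi\,\du$, bound it by $|\nl f|(\Omega)\,\|\varphi\|_{L^\infty}$, extend by Hahn--Banach to continuous test fields, and invoke the Riesz representation theorem for vector-valued measures. The only notable difference is that you explicitly verify $\|L\|=|\nl f|(\Omega)$ by mollifying $C_c^1$ fields to $C_c^\infty$ ones and checking $\dv\psi_\varepsilon\to\dv\psi$ uniformly on compacta (using that the coefficients of $\dv$ are smooth away from the coordinate hyperplanes), whereas the paper simply asserts $\|L\|=\|\Phi\|=|\nl f|(\Omega)$ without spelling out the density step; your treatment is in this respect slightly more careful, but the underlying idea is identical.
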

\begin{proof}
It is easy to see that
$$\left|\int_{\Omega}f(x) \mathrm{div
}_{\la}\varphi(x)\du\right
|\le|\nabla_{\la}f|(\Omega)
\|\varphi\|_{L^\infty(\Omega)},\ \forall \varphi \in C_{c}^{\infty }(\Omega, \mathbb{R}^{d} ).$$
Denote by the functional $\Phi$ with
$$\Phi: C_{c}^\infty(\Omega,\mathbb{R}^{d})\to\mathbb{R},$$
where$$ \left\langle\Phi,
\varphi\right\rangle:=\int_{\Omega}f(x)\mathrm{div}_{\la}\varphi(x)\du.$$
Then  the Hahn-Banach theorem derives that there exists a linear and
continuous extension $L$ of $\Phi$ to the normed space $\big(
C_{c}(\Omega,\mathbb{R}^{d}),\left\|\cdot\right
\|_{L^{\infty}(\Omega)}\big)$ such that$$ \left\|L\right\|
=\left\|\Phi\right\|=|\nabla_{\la}f|\left(\Omega\right
).$$ By the Riesz representation Theorem (cf. \cite[Corollary
1.55]{AFD}), there exists a unique $\mathbb{R}^{d}$-valued finite
Radon measure $\mu_{\la}$ with
\begin{equation*}
    L(\varphi)=\int_{\Omega}\varphi(x)\cdot d\mu_{\la}(x),\ \forall \varphi \in C_{c}(\Omega, \mathbb{R}^{d})
\end{equation*}
and so that $|\mu_{\la}|(\Omega) =\|L \|.$ Thus we have
$|\mu_{\la}|(\Omega)=|\nabla_{\la}f|(\Omega)$, which completes
  the proof.
\end{proof}

In the following theorem, we can obtain the approximation
result for the $\la$-variation.
\begin{theorem}\label{th2.5}
Let $\Omega_1$ be an open set defined in (\ref{eq3}).
Assume that $u\in\bu$, then there exists a sequence
$\{u_h\}_{h\in\mathbb{N}}\in\bu \cap C_c^\infty (\Omega_1)$ such that
$$\mathop {\lim}\limits_{h \to \infty}\|u_h-u\|_{L^1(\Omega_1,
d\mu_{\alpha})} = 0$$ and
\[\mathop {\lim}\limits_{h \to \infty} \int_{\Omega_1} |\nl u_h(x)|\du=|\nl u|(\Omega_1).\]
\end{theorem}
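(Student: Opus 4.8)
The plan is to follow the classical Meyers--Serrin / De Giorgi mollification scheme, adapted to the Laguerre structure, but with careful attention to the fact that $\delta_i = \sqrt{x_i}\,\partial_{x_i}$ carries a weight that degenerates as $x_i \to 0$ and blows up as $x_i \to \infty$. This is exactly why we work on $\Omega_1$ as defined in (\ref{eq3}), where $\sqrt{x_i} \ge 1$ for all $i$: on this set the Laguerre gradient dominates the Euclidean gradient, i.e. $|\nabla u(x)| \le |\nl u(x)|$ pointwise, so a function with finite $\la$-variation on $\Omega_1$ also has finite classical variation there, and standard $BV$ approximation machinery becomes available.

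First I would invoke the structure theorem (Lemma \ref{Lemma 2.1.}) to represent $|\nl u|(\Omega_1)$ as the total variation $|\mu_{\la}|(\Omega_1)$ of a finite $\mathbb{R}^d$-valued Radon measure. Next I would fix a standard radial mollifier $\rho \in C_c^\infty(\mathbb{R}^d)$ with $\int \rho = 1$, exhaust $\Omega_1$ by open sets $\Omega_1^j \Subset \Omega_1^{j+1}$, choose a subordinate partition of unity $\{\zeta_j\}$, and for a mollification parameter $\eps_j$ small enough set $u_h := \sum_j \rho_{\eps_j} * (\zeta_j u)$, the usual Meyers--Serrin construction. This gives $u_h \in C_c^\infty(\Omega_1)$ with $u_h \to u$ in $L^1(\Omega_1, d\mu_\alpha)$; the measure $d\mu_\alpha = \omega(x)\,dx$ has a smooth, strictly positive density on $\Omega_1$, bounded above and below on each compact piece, so $L^1_{\mathrm{loc}}$ convergence in Lebesgue measure and in $d\mu_\alpha$ are interchangeable there. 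Lower semicontinuity (Lemma \ref{lem 1}(ii)) then immediately yields $|\nl u|(\Omega_1) \le \liminf_h \int_{\Omega_1} |\nl u_h|\,\du$, so the whole task reduces to the reverse inequality $\limsup_h \int_{\Omega_1}|\nl u_h|\,\du \le |\nl u|(\Omega_1)$.

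For that reverse bound I would estimate $\delta_i u_h = \sqrt{x_i}\,\partial_{x_i} u_h$ directly. Writing $\partial_{x_i}\big(\rho_{\eps_j} * (\zeta_j u)\big) = \rho_{\eps_j} * \partial_{x_i}(\zeta_j u)$ in the distributional sense, and using that $\zeta_j u$ has a measure-valued Laguerre gradient, one commutes the mollification past $\delta_i$ up to a commutator term coming from the variation of the weight $\sqrt{x_i}$ over the $\eps_j$-ball. The key point — and this is where the mean value theorem for multivariate functions enters, as the authors flag in the introduction — is that $|\sqrt{x_i} - \sqrt{y_i}| \le C|x_i - y_i|$ uniformly for $x, y$ in a fixed compact subset of $\Omega_1$ (indeed $\frac{d}{dt}\sqrt{t} = \frac{1}{2\sqrt t} \le \frac12$ there), so the commutator is controlled by $\eps_j$ times the classical total variation of $u$ on $\Omega_1^{j+2}\setminus\Omega_1^{j-1}$, which is finite and can be made arbitrarily small by shrinking $\eps_j$ and by the locally finite overlap of the annuli. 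Collecting terms, testing against admissible $\varphi \in \mathcal{F}(\Omega_1)$ and integrating by parts (moving $\delta_i^*$ back onto $u_h$), one gets $\int_{\Omega_1}|\nl u_h|\,\du \le |\nl u|(\Omega_1) + C\sum_j \eps_j$, and choosing $\eps_j$ summable with small sum finishes the estimate.

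The main obstacle is precisely the commutator analysis: unlike the flat case where $\partial_{x_i}$ commutes exactly with convolution, here $\delta_i = \sqrt{x_i}\,\partial_{x_i}$ does not, and one must show that the error introduced by pulling the weight $\sqrt{x_i}$ outside the mollifier is negligible in the limit. Restricting to $\Omega_1$ is what makes this tractable, since there the weight and all the relevant densities ($\omega$, $\sqrt{x_i}$, and $1/\sqrt{x_i}$) are smooth and uniformly two-sided bounded on compacta, so the commutator is genuinely lower order. A secondary technical point is to verify that each $u_h$ actually lies in $\bu$ and not merely in $C_c^\infty(\Omega_1)$ — but this is immediate from Lemma \ref{lem 1}(i), since a compactly supported smooth function is automatically in $W^{1,1}_{\la}(\Omega_1) \subseteq \bu$.
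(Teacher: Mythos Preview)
Your proposal is essentially the paper's approach: Meyers--Serrin mollification with a partition of unity, lower semicontinuity for one inequality, and a commutator estimate via the mean value theorem for the reverse bound, with the restriction to $\Omega_1$ ensuring the variable coefficients stay under control. The one substantive difference is where the commutator sits. You compute $\nl u_h$ directly and the error comes from $[\sqrt{x_i},\, \rho_{\eps}\ast\,]$ acting on the classical derivative $\partial_{x_i}(\zeta_j u)$, which you then bound using $|\nabla u|\le |\nl u|$ on $\Omega_1$ together with $|\sqrt{x_i}-\sqrt{y_i}|\le \tfrac12|x_i-y_i|$. The paper instead works by duality: it tests $v_\eps$ against $\mathrm{div}_{\la}\varphi$, decomposes $\delta_i^{*} = -\sqrt{x_i}\,\partial_{x_i} - \psi_i$ with $\psi_i(x)=(\alpha_i+\tfrac12-x_i)/\sqrt{x_i}$, and the commutator (their term $J_2$) arises in the zeroth-order piece from $|\psi_i(x)-\psi_i(y)|$. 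Consequently their error is controlled by $\eps_j$ times the weighted $L^1$ quantities $\int_{\Omega_1}|u(y)|\,|y_k|^{-1/2}\,d\mu_\alpha$ and $\int_{\Omega_1}|u(y)|\,|y_k|^{-3/2}\,d\mu_\alpha$ (their condition (\ref{eq9})), rather than by the classical total variation as in your sketch. Both routes rest on the same mean-value-theorem mechanism and the same reason $\Omega_1$ is needed; yours is arguably cleaner conceptually, while the paper's version has the small advantage that the error involves only $\|u\|_{L^1}$-type quantities and does not require invoking the classical BV structure of $u$.
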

\begin{proof}
We adapt the method of the proof in \cite[Section 5.2.2, Theorem
2]{EG}, but different from its proof,
 we need to use the mean value
theorem of multivariate functions and  the intrinsic nature of the
$\la$-variation. Via the lower semicontinuity of ${\mathcal
L}^{\alpha}$-BV functions, we only need to show that for
$\varepsilon>0$, there exists a function $u_\varepsilon\in
C^\infty(\Omega_1)$ such that
\begin{equation*}
    \int_{\Omega_1} |{u_\varepsilon}(x)-u(x)|\du<\varepsilon
\end{equation*}
and
\begin{equation*}
    |{\nl}u_\varepsilon|(\Omega_1)\le|\nl u|(\Omega_1)+\varepsilon.
\end{equation*}

Fix $\varepsilon>0$. Given a positive integer $m$, define a sequence of open sets,
\[{\Omega_{1,j}}:=\Big\{x \in \Omega_1:\mathrm{dist}(x,\partial\Omega_1)>
\frac{1}{m+j}\Big\} \cap B(0,m + j),\ j\in\mathbb{N},\] where $\mathrm{dist}(x,\partial\Omega_1)=\inf\{|x-y|:y\in \partial \Omega_1\}$. Note that ${\Omega_{1,j}} \subset {\Omega_{1,j + 1}} \subset \Omega_1, \ j\in\mathbb{N}$ and $\mathop\cup\limits_{j = 0}^\infty{\Omega_{1,j}}=\Omega_1$. Since $|\nl u|(\cdot)$ is a measure, then choose a
$m\in\mathbb{N}$ so large such that
\begin{equation}\label{eq-2.2}
 |\nl u|(\Omega_1\backslash\Omega_{1,0})<\varepsilon.
\end{equation}

Set ${U_0}:=\Omega_{1,0}$ and $U_j:=\Omega_{1,j+1}\backslash
{\overline\Omega}_{1,j-1}$ for $j\ge1$. By standard results from \cite[Section 5.2.2, Theorem 2]{EG}, we conclude that there is a partition of unity associated to the covering $\{U_j\}_{j\in\mathbb N}$. Namely, there exist functions $\{f_j\}_{j\in\mathbb N} \in C_c^\infty(U_j)$ such that $0\le f_j
\le 1,\ j\ge0$ and $\sum\limits_{j=0}^\infty f_j=1$ on
$\Omega_1$.
Thus we have the fact that
\begin{align}\label{eq1}
    \sum_{j=0}^{\infty}\nl f_j&=\Big(\sqrt{x_1}\frac{\partial}{\partial{x_1}}\Big(\sum_{j=0}^{\infty} f_j\Big),\sqrt{x_2}\frac{\partial}{\partial{x_2}}\Big(\sum_{j=0}^{\infty} f_j\Big),\cdots,\sqrt{x_d}\frac{\partial}{\partial{x_d}}\Big(\sum_{j=0}^{\infty} f_j\Big)\Big)\\
    &=0\nonumber
\end{align}
on $\Omega_1$. Given $\varepsilon>0$ and $u \in {L^1}(\Omega_1
,\mathbb{R})$, extended to zero out of $\Omega_1$, we define the
following regularization
\[{u_\varepsilon}(x):=\frac{1}{\varepsilon^d}
\int_{B(x,\varepsilon)} \eta\Big(\frac{x-y}{\varepsilon
}\Big)u(y)\duy,\] where $\eta\in C_c^\infty (\mathbb{R}_{+}^d)$ is a
nonnegative radial function satisfying
$\frac{1}{\epsilon_j^d}\int_{\mathbb{R}_{+}^d}\eta\big(\frac{x-y}{\epsilon_j}\big)\du=1,\ \forall\ j\in\mathbb{N}$, and $\text{supp}\ \eta \subset B(0,1)\cap\R_{+}^d$. Then for
each $j$, there exists $0<\varepsilon_j<\varepsilon$ so small such that
\begin{equation}\label{eq-2.1}
     \left\{
    \begin{aligned}
    \hspace{-0.5cm}
       {\rm{supp}}({({{f_j}u})}_{\varepsilon _j}
        )\subseteq {U_j},\\
       \int_{\Omega_1} |({f_j}u)_{\varepsilon_j}(x)-{f_j}u(x)
        |\du<\varepsilon 2^{-(j + 1)},\\
       \;\int_{\Omega_1}  |(u\nl {f_j})_{\varepsilon_j}(x)-
       u\nl {f_j}(x)|\du <\varepsilon 2^{-(j + 1)}.
    \end{aligned}
\right.
\end{equation}

Construct \[{v_\varepsilon}(x):=\sum\limits_{j=0}^\infty
{(u{f_j})}_{\varepsilon _j}(x).\] In some neighborhood of each point $x\in \Omega_1$, there are only finitely many nonzero terms in this sum, hence $v_\varepsilon
\in C^\infty(\Omega_1)$ and $u = \sum\limits_{j = 0}^\infty
{u{f_j}}$. Therefore, by a simple computation, we obtain
\begin{equation*}
    \|v_\varepsilon-u\|_{L^1(\Omega_1,d{\mu}_{\alpha})} \le
    \sum\limits_{j=0}^\infty \int_{\Omega_1} |{({f_j}u
    )}_{\varepsilon_j}(x)-{f_j}(x)u(x)|\du<\varepsilon.
\end{equation*}
Consequently,
\begin{equation*}
    v_\varepsilon \to u \ \ \mathrm{in} \ \
    {L^1}(\Omega_1,d\mu_{\al})\ \ \mathrm{as} \ \ \varepsilon\to 0.
\end{equation*}
Now, assume $\varphi\in C_c^1(\Omega_1,\mathbb{R}^d)$ and
$|\varphi|\le 1$. We decompose the  integral as  follows:
\begin{align*}
    &\int_{\Omega_1} v_\varepsilon(x)\mathrm{div}_{{\mathcal L}^{\alpha}}\varphi(x)\du\\
    &=\int_{\Omega_1}\Big(\sum\limits_{j=0}^\infty  {(u{f_j})}_
    {\varepsilon_j}(x)\Big)\mathrm{div}_{{\mathcal L}^{\alpha}}\varphi(x)\du\\
    &=\sum\limits_{j=0}^\infty\int_{\Omega_1} {(u{f_j})}_{
    \varepsilon_j}(x)\Big({\delta^{*}_{1}}{\varphi_1}(x) + {\delta^{*}_{2}}{\varphi_2}(x) +\cdots+{\delta^{*}_{d}}{\varphi_d}(x)\Big)\du\\
    &:=I+II,
\end{align*}
where
\[\left\{
\begin{aligned}
    I&:=-\sum\limits_{j=0}^\infty\int_{\Omega_1}{(u{f_j})}_{\varepsilon_
        j}(x)\Big(\sqrt{x_1}\frac{\partial}{\partial{x_1}}{\varphi_1}(x)+\cdots+
    \sqrt{x_d}\frac{\partial}{\partial x_d}{\varphi_d}(x)\Big)\du,\\
    II&:=-\sum\limits_{j=0}^\infty \int_{\Omega_1}
    (uf_j)_{\varepsilon_j}(x)
    \Big(\frac{\alpha_1+\frac{1}{2}-x_1}{\sqrt{x_1}}\varphi_1(x)+
    \cdots+
    \frac{\alpha_d+\frac{1}{2}-x_d}{\sqrt{x_d}}\varphi_d(x)\Big)\du.
\end{aligned}\right.\]

For the sake of research, let
\begin{equation}\label{eqq7}
\mathrm{\widetilde{div}}_{\mathcal{L}^{\alpha}}\varphi=\delta_{1}{\varphi
_1}+\delta_{2}{\varphi_2}+ \cdots+\delta_{d}{\varphi_d}.
\end{equation}

As for $I$, we obtain
 \begin{align*}
    I&=-\sum\limits_{j=0}^\infty \int_{\Omega_1} {(u{f_j})}_{\varepsilon_j
    }(x)\mathrm{\widetilde{div}}_{{\mathcal{L}}^{\alpha}}\varphi(x)\du\\
    &=-\sum\limits_{j=0}^\infty \int_{\Omega_1}  (u{f_j})(y)\mathrm{\widetilde{div}_{{\mathcal L}^{\alpha}}}(\eta_{\varepsilon_j}*\varphi(y))\duy\\
    &=-\sum\limits_{j=0}^\infty \int_{\Omega_1} u(y)\di({f_j}(\eta_
    {\varepsilon _j}*\varphi))(y)\duy\\&\quad+\sum\limits_{j=0}^\infty\int_{
        \Omega_1} u(y)\nl{f_j}\cdot(\eta_{\varepsilon_j}*\varphi)(y)\duy\\
    &=-\sum\limits_{j=0}^\infty \int_{\Omega_1}u(y)\di({f_j}(\eta_
    {\varepsilon_j}*\varphi))(y)\duy\\&\quad\sum\limits_{j=0}^\infty \int_{\Omega_1}
    \varphi(y)\Big(\eta_{\varepsilon_j}*(u\nl {f_j})(y)-u\nl {f_j}
    (y)\Big)\duy\\
    &:={I_1}+{I_2},
\end{align*}
where in the last equality we have used the fact (\ref{eq1}). In fact, when
$\|\varphi\|_{L^\infty}\le 1$, it holds that $|f_j(\eta_{\varepsilon_j}
*\varphi)(x)|\le 1,\ j\in\mathbb{N}$, and each point in $\Omega$ belongs
to at most three of the sets $\{U_j\}_{j=0}^\infty$. Furthemore,
(\ref {eq-2.1}) implies that $|I_2|<\varepsilon$.

On the other hand, we change the order of integration to get
\begin{align*}
    II&=-\sum\limits_{j=0}^{\infty}\int_{\Omega_1}  {(u{f_j})}_{\varepsilon_
        j}(x)\left(\frac{\alpha_1+\frac{1}{2}-x_1}{\sqrt{x_1}}\varphi_1(x)+\cdots+
    \frac{\alpha_d+\frac{1}{2}-x_d}{\sqrt{x_d}}\varphi_d(x)\right) \du\\
    &=-\sum\limits_{j=0}^\infty \int_{\Omega_1}\int_{\Omega_1}\frac{1}{\varepsilon_j^d}\eta \Big( \frac{x-y}{\varepsilon_j}\Big)u(y){f_j}(y)\Big(\sum\limits_{k=1}^d \frac{\alpha_k+\frac{1}{2}-y_k}{\sqrt{y_k}}{\varphi_k}(x)\Big)\duy\du\\
    &\quad-\sum\limits_{j=0}^{\infty}\int_{\Omega_1}\int_{\Omega_1}\frac{1}
    {\varepsilon_j^d}\eta\Big(\frac{x-y}{\varepsilon_j}
    \Big)u(y){f_j}(y)\\ &\quad\times\left(\sum\limits_{k = 1}^d \Big(\frac{\alpha_k+\frac{1}{2}-x_k
    }{\sqrt{x_k}}-\frac{\alpha_k+\frac{1}{2}-y_k}{\sqrt{y_k}}\Big){\varphi_k}(x)   \right)\duy\du\\
    &=-\sum\limits_{j=0}^\infty\int_{\Omega_1} u(y){f_j}(y) \Big(\Big(\sum
    \limits_{k=1}^{d} \frac{\alpha_k+\frac{1}{2}-y_k}{\sqrt{y_k}}{\varphi_k}\Big)\ast\eta _{\varepsilon_j}(y)\Big)\duy\\
    &\quad-\sum\limits_{j=0}^{\infty}\int_{\Omega_1}\int_{\Omega_1}\frac{1}
    {\varepsilon_j^d}\eta \Big(\frac{x-y}{\varepsilon_j}
    \Big)u(y){f_j}(y)\\ &\quad\times\left(\sum\limits_{k=1}^d\Big(\frac{\alpha_k+\frac{1}{2}-x_k
    }{\sqrt{x_k}}-\frac{\alpha_k+\frac{1}{2}-y_k}{\sqrt{y_k}}\Big){\varphi_k}(x)\right)\duy\du.
\end{align*}

Thus, the above estimate of the term $I_2$ shows that
\begin{equation*}
    \left|\int_{\Omega_1}{v_\varepsilon}(x)\mathrm{div}_{{\mathcal L}^{\alpha}
    }\varphi(x)\du\right|=|I_1+I_2+II|\le J_1+J_2+\varepsilon,
\end{equation*}
where
\begin{align*}
    J_1&:=\Bigg|-\sum\limits_{j=0}^\infty\int_{\Omega_1} u(y)\mathrm{\widetilde{div}}(f_j
    (\eta_{\varepsilon_j}*\varphi))(y)\duy\\
    &\quad-\sum\limits_{j=0}^\infty\int_{\Omega_1} u(y){f_j}(y) \left(\sum\limits_{k=1}^d
    \frac{\alpha_k+\frac{1}{2}-y_k}{\sqrt{y_k}}({\varphi_k}*\eta_{
        \varepsilon_j}(y))\right)\duy\Bigg|
\end{align*}
and
\begin{align*}
    J_2:&=\Bigg|-\sum\limits_{j=0}^\infty\int_{\Omega_1}\int_{\Omega_1}
    \frac{1}{\varepsilon_j^d}\eta\Big(\frac{x-y}{\varepsilon
        _j}\Big)u(y){f_j}(y)\\ &\times\left(\sum\limits_{k=1}^d\Big( \frac{\alpha_k+\frac{1}{2}-x_k}{\sqrt{x_k}}-\frac{\alpha_k+\frac{1}{2}-y_k}{\sqrt{y_k}}\Big ){\varphi _k}(x)\right)\duy\du\Bigg|.
\end{align*}
Furthermore,
\begin{align*}
    J_1&=\Bigg|-\sum\limits_{j=0}^\infty\int_{\Omega_1}u(y)\di(f_j
    (\eta_{\varepsilon_j}*\varphi))(y)\duy\\&\quad-\sum\limits_{j=0}^\infty\int_{\Omega_1} u(y){f_j}(y)\left( \sum\limits_{k=1}^d\frac{\alpha_k+\frac{1}{2}-y_k}{\sqrt{y_k}} \varphi_k*\eta_{\varepsilon _j}(y)\right)\duy\Bigg|\\
    &\le\Bigg|-\int_{\Omega_1}u(y)\di({f_0}(\eta_{\varepsilon_0}*
    \varphi))(y)\duy\\&\quad-\int_{\Omega_1}u(y){f_0}(y) \left(\sum\limits_{k=1}^d
    \frac{\alpha_k+\frac{1}{2}-y_k}{\sqrt{y_k}} {\varphi_k}*
    \eta_{\varepsilon_0}(y)\right)\duy\Bigg|\\
    &\quad+\Bigg|-\sum\limits_{j=1}^\infty\int_{\Omega_1}u(y)\di
    (f_j(\eta_{\varepsilon_j}*\varphi ))(y)\duy\\&\quad-\sum\limits_
    {j=1}^\infty\int_{\Omega_1}u(y){f_j}(y)\left(\sum\limits_{k=1}^d
    \frac{\alpha_k+\frac{1}{2}-y_k}{\sqrt{y_k}} {\varphi_k}*
    \eta_{\varepsilon_j}(y)\right)\duy\Bigg|\\
    &\le |\nabla_{{\mathcal L}^{\alpha}}u|(\Omega_1)+
    \sum\limits_{j=1}^\infty |\nabla_{{\mathcal L}^{\alpha}}u
    |(U_j)\\
    &\le |\nabla_{\mathcal L^{\alpha}}u|(\Omega_1)+
    |\nabla_{{\mathcal L}^{\alpha}}u|(\Omega_1\backslash
    \Omega_{1,0})\\
    &\le |\nabla_{\mathcal L^{\alpha}}u|(\Omega_1)+3\varepsilon,
\end{align*}
where we have used the fact (\ref{eq-2.2}) in the last
inequality. Note that $\psi(x_k)=\frac{\alpha_k+\frac{1}{2}-x_k}{\sqrt{x_k}}$, $\|
\varphi\|_{L^\infty}\le 1$ and $\text{supp}\ \eta\subseteq B(0,1)\cap\R_{+}^d$. When $|x_k-y_k|<{\varepsilon_j}<|y_k|/{2}$, by the mean value theorem of multivariate functions, there exists $\theta\in (0,1)$ such that
\begin{align*}
    |\psi(x_k)-\psi(y_k)|&=\Big|\frac{\alpha_k+\frac{1}{2}}{2}(y_k+\theta(x_k-y_k)   )^{-\frac{3}{2}}+\frac{1}{2}(y_k+\theta(x_k-y_k))^{-\frac{1}{2}}\Big|
    |x_k-y_k|\\
    &\le\Big(\frac{|\alpha_k+\frac{1}{2}|}{2}|y_k+\theta(x_k-y_k)|^{-\frac{3}{2}}    +\frac{1}{2}|y_k+\theta(x_k-y_k)|^{-\frac{1}{2}}\Big)|x_k-y_k|.
\end{align*}

Consequently, we obtain
\begin{align*}
    J_2&=\Bigg|\sum\limits_{j=0}^\infty\int_{\Omega_1}\int_{\Omega_1}
    \frac{1}{\varepsilon_j^d}\eta\Big(\frac{x-y}{\varepsilon_j}
    \Big)u(y){f_j}(y)\Bigg(\sum\limits_{k=1}^d \Big(\frac{\alpha_k+\frac{1}{2}-x_k
    }{\sqrt{x_k}}-\frac{\alpha_k+\frac{1}{2}-y_k}{\sqrt{y_k}}\Big){\varphi_k}(x)\Bigg)\\&
    \quad\times\duy\du\Bigg |\\
    &\le\frac{\varepsilon_j}{2}\sum_{j=0}^{\infty}\int_{\Omega_1}
    \int_{\Omega_1}\Big|\frac{1}{\varepsilon_j^d}\eta\Big(\frac{x-y}{\varepsilon_j}
    \Big)u(y){f_j}(y)\Big|\sum_{k=1}^{d}\Big|\alpha_k+\frac{1}{2}\Big||y_k+\theta(x
    -y_k)|^{-\frac{3}{2}}\\&\quad\times|\varphi_k(x)|\duy\du\\
    &\quad+\frac{\varepsilon_j}{2}\sum_{j=0}^{\infty}\int_{\Omega_1}\int_{\Omega_1}\Big|  \frac{1}{\varepsilon_j^d}\eta\Big(\frac{x-y}{\varepsilon_j}
    \Big)u(y){f_j}(y)\Big|\sum_{k=1}^{d}|y_k+\theta(x_k-y_k)|^{-\frac{1}{2}}\\ &\quad\times|\varphi_k(x)|\duy\du\\
    &\le C\varepsilon_j\sum_{j=0}^{\infty}\int_{\Omega_1}\int_{\Omega_1}
    \Big|\frac{1}{\varepsilon_j^d}\eta\Big(\frac{x-y}{\varepsilon_j}
    \Big)u(y){f_j}(y)\Big|\sum_{k=1}^{d}\Big|\alpha_k+\frac{1}{2}\Big||y_k|^{-\frac{3}{2}}\duy\du\\
    &\quad+C\varepsilon_j\sum_{j=0}^{\infty}\int_{\Omega_1}\int_{\Omega_1}\Big|  \frac{1}{\varepsilon_j^d}\eta\Big(\frac{x-y}{\varepsilon_j}
    \Big)u(y){f_j}(y)\Big|\sum_{k=1}^{d}|y_k|^{-\frac{1}{2}}\duy\du\\
    &\le C\varepsilon_j\sum\limits_{j=0}^\infty \int_{\Omega_1}
    \int_{\Omega_1}  \Big|\frac{1}{\varepsilon_j^d}\eta \Big(\frac{x-y}
    {\varepsilon_j}\Big)\Big|\du\sum_{k=1}^{d}\Big|\alpha_k+\frac{1}{2}\Big| |u(y)|  |{f_j}(y)|{|y_k|}^{-\frac{3}{2}}\duy\\
    &\quad+C\varepsilon_j\sum\limits_{j=0}^\infty\int_{\Omega_1}
    \int_{\Omega_1}\Big|\frac{1}{\varepsilon_j^d}\eta\Big(\frac{x-y}
    {\varepsilon_j}\Big)\Big|\du\sum_{k=1}^{d}|u(y)||f_j(y)||y_k|^{-\frac{1}{2}}\duy\\
    &\le C\varepsilon_j\int_{\mathbb{R}_{+}^d}\Big|\frac{1}{\epsilon_j^d}\eta\Big(\frac{x-y}{\epsilon_j}\Big)\Big|\du
    \sum\limits_{k=1}^{d} \int_{\Omega_1}\Big|\alpha_k+\frac{1}{2}\Big||u(y)|\Big| \sum_{j=0}^{\infty}{f_j}(y)\Big||y_k|^{-\frac{3}{2}}\duy\\
    &\quad+C\varepsilon_j\int_{\mathbb{R}_{+}^d}\Big|\frac{1}{\epsilon_j^d}\eta\Big(\frac{x-y}{\epsilon_j}\Big)\Big|\du
    \sum\limits_{k=1}^{d}\int_{\Omega_1}|u(y)|\Big|\sum_{j=0}^{\infty}{f_j}(y)\Big||y_k  |^{-\frac{1}{2}}\duy\\
    &=C\varepsilon_j\int_{\Omega_1}|u(y)|\sum\limits_{k=1}^{d}\Big|\alpha_k+\frac{1}{2}  \Big||y_k|^{-\frac{3}{2}}\duy
    +C\varepsilon_j\int_{\Omega_1}|u(y)|\sum\limits_{k=1}^{d}|y_k|^{-\frac{1}{2}}\duy\\
    &\lesssim\varepsilon,
\end{align*}where we have used the facts that
\begin{equation}\label{eq9}
\left\{
\begin{aligned}
\hspace{-0.5cm}
{\int_{\Omega_1}|u(y)|\sum_{k=1}^{d}|y_k|^{-\frac{1}{2}}\duy<\infty,}\\
{\int_{\Omega_1}|u(y)|\sum_{k=1}^{d}\Big|\alpha_{k}+\frac{1}{2}\Big||y_k|^{-\frac{3}{2}}\duy<\infty,}
\end{aligned}
\right.
\end{equation}and in the third inequality we have used the fact that
\begin{align*}
    |y_k+\theta(x_k-y_k)|\geq |y_k|-\theta|x_k-y_k|=\Big
    (1-\frac{\theta}{2}\Big) |y_k|.
\end{align*}
By taking the supremum over $\varphi $ and
the arbitrariness of $\varepsilon>0$, the theorem can be proved.
\end{proof}

\begin{remark}\label{rem2.1} By computation, we conclude that    the function $u\in \bv$  satisfies
(\ref{eq9})   in Theorem \ref{th2.5} when $d\ge 3$, at this time,
Theorem \ref{th2.5} is valid for any open set $\Omega\subseteq
\mathbb{R}^{d}$.
\end{remark}

Moreover, by Lemma \ref{lem 1} and Theorem \ref{th2.5}, we have the
following max-min property of the $\la$-variation.
\begin{theorem}\label{theorem2}
Let $\Omega_1$ be an open set defined in (\ref{eq3}). Suppose $u,v\in L^1(\Omega_1,\da)$, then
    \begin{equation*}
        |\nl\max\{u,v\}
        |(\Omega_1)+|\nl\min\{u,v\}|(\Omega_1)\le |\nl
        u|(\Omega_1)+|\nl v|(\Omega_1).
    \end{equation*}
\end{theorem}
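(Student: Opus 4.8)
The plan is to reduce the submodularity inequality to compactly supported Lipschitz functions by the approximation result of Theorem~\ref{th2.5}, to invoke the elementary pointwise identity relating the $\la$-gradients of $\max\{u_h,v_h\}$ and $\min\{u_h,v_h\}$ to those of $u_h$ and $v_h$, and then to pass to the limit via the lower semicontinuity of Lemma~\ref{lem 1}(ii).

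If $|\nl u|(\Omega_1)=\infty$ or $|\nl v|(\Omega_1)=\infty$ the right-hand side is infinite and there is nothing to prove, so I may assume $u,v\in\bu$. Applying Theorem~\ref{th2.5} to $u$ and to $v$ separately yields sequences $\{u_h\}_h,\{v_h\}_h\subset\bu\cap C_c^\infty(\Omega_1)$ with $u_h\to u$, $v_h\to v$ in $L^1(\Omega_1,\da)$ and
\[
\int_{\Omega_1}|\nl u_h(x)|\,\du\to|\nl u|(\Omega_1),\qquad
\int_{\Omega_1}|\nl v_h(x)|\,\du\to|\nl v|(\Omega_1).
\]

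Next I fix $h$ and set $M_h:=\max\{u_h,v_h\}$ and $m_h:=\min\{u_h,v_h\}$; these are Lipschitz functions whose support lies in the compact set $\mathrm{supp}\,u_h\cup\mathrm{supp}\,v_h\subset\Omega_1$, so $M_h,m_h\in\bu$. Since $\nl=(\sqrt{x_1}\,\partial_{x_1},\dots,\sqrt{x_d}\,\partial_{x_d})$ is a pointwise invertible diagonal transformation of the Euclidean gradient, the chain rule for Lipschitz functions gives, for a.e.\ $x\in\Omega_1$,
\[
|\nl M_h(x)|+|\nl m_h(x)|=|\nl u_h(x)|+|\nl v_h(x)|,
\]
since $\nl M_h=\nl u_h$, $\nl m_h=\nl v_h$ a.e.\ on $\{u_h>v_h\}$, with the roles exchanged on $\{u_h<v_h\}$, while on $\{u_h=v_h\}$ one has $\nl u_h=\nl v_h$ a.e., hence also $\nl M_h=\nl m_h=\nl u_h$ a.e.\ there. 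Because $M_h$ is Lipschitz with compact support, integration by parts against any $\varphi\in\mathcal F(\Omega_1)$ is valid, so $\int_{\Omega_1}M_h\,\dv\varphi\,\du=\int_{\Omega_1}\nl M_h\cdot\varphi\,\du\le\int_{\Omega_1}|\nl M_h|\,\du$; taking the supremum over $\varphi$ gives $|\nl M_h|(\Omega_1)\le\int_{\Omega_1}|\nl M_h(x)|\,\du$, and similarly for $m_h$. Combining the last three observations,
\[
|\nl M_h|(\Omega_1)+|\nl m_h|(\Omega_1)\le\int_{\Omega_1}|\nl u_h(x)|\,\du+\int_{\Omega_1}|\nl v_h(x)|\,\du.
\]

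Finally, from $|\max\{a,b\}-\max\{c,d\}|\le|a-c|+|b-d|$ and the analogous bound for the minimum, $M_h\to\max\{u,v\}$ and $m_h\to\min\{u,v\}$ in $L^1(\Omega_1,\da)$, hence in $L^1_{loc}(\Omega_1,\da)$; applying Lemma~\ref{lem 1}(ii) to both sequences and letting $h\to\infty$ in the previous display yields the asserted inequality. The points I expect to require the most care, and hence the main obstacle, are the a.e.\ gradient identity on the coincidence set $\{u_h=v_h\}$ and the passage from smooth to Lipschitz functions when estimating $|\nl M_h|(\Omega_1)$; if one prefers to stay inside $C_c^\infty(\Omega_1)$ one can instead replace $\max$ and $\min$ by the smooth regularizations $\tfrac12\big(u_h+v_h\pm(\sqrt{(u_h-v_h)^2+\eps^2}-\eps)\big)$, apply Lemma~\ref{lem 1}(i) directly to each, use the pointwise bound $|\tfrac12((1+\ld)a+(1-\ld)b)|+|\tfrac12((1-\ld)a+(1+\ld)b)|\le|a|+|b|$ for $\ld\in(-1,1)$ in place of the exact identity, and then let $\eps\to0$.
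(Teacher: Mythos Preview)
Your proof is correct and follows essentially the same route as the paper: approximate $u,v$ by $C_c^\infty$ functions via Theorem~\ref{th2.5}, use the pointwise splitting of $\nl\max\{u_h,v_h\}$ and $\nl\min\{u_h,v_h\}$ on $\{u_h>v_h\}$ and $\{u_h\le v_h\}$, and pass to the limit with Lemma~\ref{lem 1}(ii). You are in fact more careful than the paper about the Lipschitz-vs-smooth issue and the coincidence set, and your suggested $\eps$-regularization is a clean way to sidestep both if desired.
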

\begin{proof}
Without loss of generality, we may assume
\begin{equation*}
    |\nl u|(\Omega_1)+|\nl v|(\Omega_1)<\infty.
\end{equation*}

By Theorem \ref{th2.5}, we take two functions
\begin{equation*}
    u_h, v_h\in\bu\cap C_c^\infty (\Omega_1),\ \ h=1,2,...,
\end{equation*}
such that
\begin{equation*}
\begin{cases}
u_{h}\rightarrow
u, v_{h}\rightarrow v\ \ \hbox{in}\ \ L^1(\Omega_1,d\mu_{\al}),\\
\int_{\Omega_1} |\nl u_{h}(x)|\du\rightarrow|\nl u|(\Omega_1),\\
\int_{\Omega_1} |\nl v_{h}(x)|\du\rightarrow|\nl v|(\Omega_1).
\end{cases}
\end{equation*}
Since
\begin{equation*}
    \max\{u_{h},v_{h}\}\rightarrow \max\{u,v\}\ \ \&\ \
    \min\{u_{h},v_{h}\}\rightarrow \min\{u,v\}\ \ \hbox{in}\ \
    L^1(\Omega_1,d\mu_{\al}).
\end{equation*}
Via Lemma \ref{lem 1}, it follows that
\begin{align*}
    |&\nl\max\{u,v\}|(\Omega_1)+|\nl\min\{u, v\}|(\Omega_1)\\
    &\le\mathop{\lim\inf}\limits_{h\rightarrow\infty}\int_{\Omega_1}
    |\nl\max\{u_{h},v_{h}\}|\du+\mathop{\lim\inf}\limits_{h\rightarrow\infty}\int_{
    \Omega_1}|\nl \min\{u_{h},v_{h}\}|\du\\
    &\le \mathop{\lim\inf}\limits_{h\rightarrow\infty}
    \bigg(\int_{\Omega_1}|\nl \max\{u_{h},v_{h}\}|\du+ \int_{\Omega_1}|\nl
    \min\{u_{h},v_{h}\}|\du\bigg)\\
    &\le\mathop{\lim\inf}\limits_{h\rightarrow\infty}
    \bigg(\int_{\{x\in\Omega_1: u_{h}\le v_{h}\}}
    |\nl v_{h} |\du+\int_{\{x\in\Omega_1: u_{h}> v_{h}\}}
    |\nl u_{h} |\du\\&\quad+\int_{\{x\in\Omega_1: u_{h}\le v_{h}\}}
    |\nl u_{h} |\du+\int_{\{x\in\Omega_1: u_{h}> v_{h}\}}
    |\nl v_{h} |\du\bigg)\\
    &=\mathop{\lim\inf}\limits_{h\rightarrow\infty}\int_{\Omega_1}
    |\nl u_{h}(x)|\du+\mathop{\lim\inf}\limits_{h\rightarrow\infty}\int_{\Omega_1}
    |\nl v_{h}(x)|\du\\
    &\le \lim_{h\rightarrow\infty}\int_{\Omega_1}
    |\nl u_{h}(x)|\du+\lim_{h\rightarrow\infty}\int_{\Omega_1}
    |\nl v_{h}(x)|\du\\
    &=|\nl u|(\Omega_1)+|\nl v|(\Omega_1).
\end{align*}
\end{proof}

\subsection{Basic properties of Laguerre perimeter}\label{sub2.2}
In this subsection, we introduce a kind of new perimeter: the
Laguerre perimeter ($\la$-perimeter in short). Moreover,
we establish the related results for it.

The $\la$-perimeter of $E\subset \Omega$ can be defined
as follows:
\begin{equation}\label{def-hp}
P_{\la}(E,\Omega)=|\nabla_{\la}1_E|(\Omega)=\sup_{\varphi\in\mathcal {F}(\Omega)}\Big\{\int_E \mathrm{div}_{\la}\varphi(x)\du\Big\},
\end{equation}
where $\mathcal F(\Omega)$ is defined in Section \ref{sub2.1}. In particular,
we shall also write $$P_{\la}(E,\R^d_+)=P_{\la}(E)$$.

The following conclusion is a direct corollary of Lemma \ref{lem 1} to replace $f$ with $1_{E}$.
\begin{corollary}(Lower semicontinuity of $P_{\la}$).\label{semicontinuity-2}
    Suppose $1_{E_k}\to 1_E$ in $L_{loc}^1(\Omega,\da)$, where $E$ and
    $E_k$, $k\in\mathbb N,$ are subsets of $\Omega $, then
    \begin{equation*}
        P_{\la}(E,\Omega)\le\mathop {\lim\inf}\limits_{k\to\infty}P_{\la}(E_k,\Omega).
    \end{equation*}
\end{corollary}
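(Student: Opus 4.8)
The statement to prove is Corollary~\ref{semicontinuity-2}, the lower semicontinuity of the Laguerre perimeter. The plan is to deduce it immediately from Lemma~\ref{lem 1}(ii) by specializing $f$ to an indicator function. Concretely, since $P_{\la}(E,\Omega)=|\nabla_{\la}\mathbf 1_E|(\Omega)$ and $P_{\la}(E_k,\Omega)=|\nabla_{\la}\mathbf 1_{E_k}|(\Omega)$ by the very definition~(\ref{def-hp}), the hypothesis that $\mathbf 1_{E_k}\to\mathbf 1_E$ in $L^1_{loc}(\Omega,d\mu_\alpha)$ is exactly the hypothesis of Lemma~\ref{lem 1}(ii) with $f_k=\mathbf 1_{E_k}$ and $f=\mathbf 1_E$.

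The only point that needs a word of comment is that Lemma~\ref{lem 1}(ii) is stated for functions $f_k\in\mathcal{BV}_{\la}(\Omega)$, so one should first observe that the conclusion is trivial unless $\liminf_{k\to\infty}P_{\la}(E_k,\Omega)<\infty$; passing to a subsequence realizing this liminf, we may assume each $E_k$ in that subsequence has finite $\la$-perimeter, i.e.\ $\mathbf 1_{E_k}\in\mathcal{BV}_{\la}(\Omega)$ (note $\mathbf 1_{E_k}\in L^1(\Omega,d\mu_\alpha)$ automatically since $\mu_\alpha$ is a finite measure on $\mathbb R^d_+$, hence on $\Omega$). One also notes that passing to a subsequence does not affect the $L^1_{loc}$ convergence.

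First I would state that by~(\ref{def-hp}) the quantities in question are Laguerre variations of indicator functions. Next I would invoke Lemma~\ref{lem 1}(ii) applied to $f_k=\mathbf 1_{E_k}$, $f=\mathbf 1_E$, obtaining $|\nabla_{\la}\mathbf 1_E|(\Omega)\le\liminf_{k\to\infty}|\nabla_{\la}\mathbf 1_{E_k}|(\Omega)$, which is precisely the asserted inequality. Finally I would handle the finiteness caveat by the subsequence remark above. There is essentially no obstacle here: the corollary is a genuine specialization of the already-proved lemma, and the proof is a two-line application. If anything, the only thing to be careful about is not to overlook that $\mathbf 1_{E_k}\in L^1(\Omega,d\mu_\alpha)$, which is where finiteness of $\mu_\alpha$ is used.

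\begin{proof}
By the definition of the $\la$-perimeter in~(\ref{def-hp}), we have $P_{\la}(E,\Omega)=|\nabla_{\la}1_E|(\Omega)$ and $P_{\la}(E_k,\Omega)=|\nabla_{\la}1_{E_k}|(\Omega)$ for every $k\in\mathbb N$. Since $\mu_\alpha$ is a probability measure on $\mathbb R^d_+$, the indicators $1_{E_k}$ and $1_E$ all belong to $L^1(\Omega,d\mu_\alpha)$. If $\liminf_{k\to\infty}P_{\la}(E_k,\Omega)=\infty$ there is nothing to prove, so we may pass to a subsequence (still denoted $\{E_k\}$, and still satisfying $1_{E_k}\to 1_E$ in $L^1_{loc}(\Omega,d\mu_\alpha)$) along which the liminf is finite and each $1_{E_k}\in\bv$. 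Applying Lemma~\ref{lem 1}(ii) with $f_k=1_{E_k}$ and $f=1_E$ yields
\begin{equation*}
P_{\la}(E,\Omega)=|\nabla_{\la}1_E|(\Omega)\le\mathop{\lim\inf}\limits_{k\to\infty}|\nabla_{\la}1_{E_k}|(\Omega)=\mathop{\lim\inf}\limits_{k\to\infty}P_{\la}(E_k,\Omega),
\end{equation*}
which completes the proof.
\end{proof}
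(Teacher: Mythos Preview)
Your proof is correct and follows exactly the paper's approach: the paper states just before the corollary that it is ``a direct corollary of Lemma~\ref{lem 1} to replace $f$ with $1_{E}$,'' and you carry this out with the appropriate care about finiteness and $L^1$-membership. If anything, your argument is more detailed than the paper's one-line remark.
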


Moreover, by Theorem \ref{theorem2}, via choosing $u=1_E$ and
$v=1_F$ for any compact subsets $E, F$ in $\Omega_1$, we immediately
obtain the following corollary. According to Xiao and Zhang's result
in \cite[Section 1.1 (iii)]{XZ},   we also give the  equality
condition of (\ref{eq8}).
\begin{corollary}\label{coro2.2}
    For any compact subsets $E, F$ in $\Omega_1$, we have
    \begin{equation}\label{eq8}
        P_{\la}(E\cap F,\Omega_1)+P_{\la}(E\cup F,
        \Omega_1)\le P_{\la}(E,\Omega_1)+P_{\la}
        (F,\Omega_1),
    \end{equation}where $\Omega_1$ is an open set defined in
    (\ref{eq3}).
Especially, if $P_{\la}(E\setminus (E\cap F),\Omega_1) \cdot
P_{\la}(F\setminus (F\cap E),\Omega_1)=0$, the equality of
(\ref{eq8}) holds true.  \end{corollary}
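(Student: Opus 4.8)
The plan is to deduce Corollary \ref{coro2.2} directly from the submodularity inequality for the $\la$-variation established in Theorem \ref{theorem2}, applied to the indicator functions $u = 1_E$ and $v = 1_F$. The first step is to observe that for $E, F \subseteq \Omega_1$ compact (hence $1_E, 1_F \in L^1(\Omega_1, d\mu_\alpha)$, since $\mu_\alpha$ is a finite measure and $E,F$ are bounded), one has the pointwise identities $\max\{1_E, 1_F\} = 1_{E \cup F}$ and $\min\{1_E, 1_F\} = 1_{E \cap F}$. Substituting these into the conclusion of Theorem \ref{theorem2} and recalling the definition (\ref{def-hp}) of the $\la$-perimeter, $P_{\la}(G, \Omega_1) = |\nl 1_G|(\Omega_1)$, yields (\ref{eq8}) immediately.

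For the equality condition, I would follow Xiao and Zhang's argument in \cite[Section 1.1 (iii)]{XZ}. The idea is that (\ref{eq8}) comes from splitting $\Omega_1$ according to whether $u_h \le v_h$ or $u_h > v_h$ in the approximating sequences, and the slack in the inequality is controlled by the $\la$-variation of the ``difference'' parts. Concretely, one uses the general principle that for a submodular set function $P_{\la}(\cdot, \Omega_1)$, the defect in submodularity between disjoint-type pieces vanishes precisely when $P_{\la}(E \setminus (E \cap F), \Omega_1) \cdot P_{\la}(F \setminus (F \cap E), \Omega_1) = 0$; that is, when one of the two ``private'' parts $E \setminus F$ or $F \setminus E$ has zero $\la$-perimeter in $\Omega_1$. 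In that case the structure theorem (Lemma \ref{Lemma 2.1.}) forces the corresponding measure $\mu_{\la}$ to vanish, so the union and intersection contribute exactly the sum of the individual perimeters. I would spell this out by writing $P_{\la}(E \cup F, \Omega_1)$ and $P_{\la}(E \cap F, \Omega_1)$ in terms of the associated Radon measures and checking additivity on the relevant disjoint pieces.

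The main obstacle I anticipate is the equality case rather than the inequality itself: the inequality is an essentially formal substitution into Theorem \ref{theorem2}, but making the equality condition rigorous requires care about how the $\la$-variation (a sup over vector fields, not obviously additive over disjoint sets) behaves on $E \cap F$, $E \setminus F$, $F \setminus E$. One must verify a localization/additivity property of $P_{\la}(\cdot, \Omega_1)$ on disjoint Borel sets whose ``interface'' has zero perimeter, which is exactly where the cited result of Xiao--Zhang enters. I would therefore invoke \cite[Section 1.1 (iii)]{XZ} for this structural fact and only check that its hypotheses transfer verbatim to the Laguerre setting, since the argument there is measure-theoretic and does not depend on the specific form of the differential operator.
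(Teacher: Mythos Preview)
Your derivation of the inequality \eqref{eq8} is exactly what the paper does: substitute $u=1_E$, $v=1_F$ into Theorem~\ref{theorem2} and use $\max\{1_E,1_F\}=1_{E\cup F}$, $\min\{1_E,1_F\}=1_{E\cap F}$.

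For the equality case, however, your route diverges from the paper's and is considerably more roundabout. You propose to invoke the structure theorem (Lemma~\ref{Lemma 2.1.}) and an additivity/localization property of the associated Radon measures on disjoint Borel pieces, flagging this as the main obstacle. The paper avoids all of this. Assuming without loss of generality $P_{\la}(E\setminus(E\cap F),\Omega_1)=0$, it simply observes two things. First, applying \eqref{eq8} to the disjoint pair $E\setminus(E\cap F)$ and $E\cap F$ (whose intersection is empty, hence has zero perimeter) gives
\[
P_{\la}(E,\Omega_1)\le P_{\la}(E\setminus(E\cap F),\Omega_1)+P_{\la}(E\cap F,\Omega_1)=P_{\la}(E\cap F,\Omega_1).
\]
Second, writing $F=(E\cup F)\setminus(E\setminus(E\cap F))$ and splitting the supremum in the definition \eqref{def-hp},
\[
P_{\la}(F,\Omega_1)\le P_{\la}(E\cup F,\Omega_1)+P_{\la}(E\setminus(E\cap F),\Omega_1)=P_{\la}(E\cup F,\Omega_1).
\]
Adding these two lines yields the reverse inequality. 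No structure theorem, no measure additivity, no appeal to properties that might not transfer to the Laguerre setting---just \eqref{eq8} itself (reused for subadditivity on disjoint sets) and the sup definition of perimeter. Your plan would likely work, but the obstacle you identify is an artifact of the approach, not of the problem.
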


\begin{proof}Since (\ref{eq8}) is valid, we only need to prove its converse
inequality  holds true under the above condition. Obviously, the
condition $P_{\la}(E\setminus (E\cap F),\Omega_1)\cdot P_{\la}(F\setminus (E\cap F),\Omega_1)=0$ implies that $P_{\la}(E\setminus (E\cap F),\Omega_1)=0$ or $P_{\la}(F\setminus (E\cap F),\Omega_1)=0$. Suppose $P_{\la}(E\setminus (E\cap F),\Omega_1)=0$. Via (\ref{eq8}), we have
\begin{align}\label{eq11}
P_{\la}(E,\Omega_1)
&=P_{\la}((E\setminus (E\cap F))\cup (E\cap F),\Omega_1)\\ \nonumber
&\le P_{\la}(E\setminus (E\cap F),\Omega_1)+P_{\la}( E\cap F,\Omega_1)\\ \nonumber
&=P_{\la}(E\cap F,\Omega_1).
\end{align}
Using (\ref{def-hp}) and $E\cup F=F\cup (E\setminus (E\cap F))$, we
obtain
\begin{align}\label{eqq12}
P_{\la}(F,\Omega_1)
&=\sup_{\varphi\in\mathcal {F}(\Omega_1)}
\Big\{\int_F \mathrm{div}_{\la}\varphi(x)\du\Big\}\\ \nonumber
&=\sup_{\varphi\in\mathcal {F}(\Omega_1)}\Big\{\int_{E\cup F}
\mathrm{div}_{\la}\varphi(x)\du-\int_{E\setminus (E\cap F)}
\mathrm{div}_{\la}\varphi(x)\du\Big\}\\ \nonumber
&\le\sup_{\varphi\in\mathcal {F}(\Omega_1)}\Big\{\int_{E\cup F}
\mathrm{div}_{\la}\varphi(x)\du\Big\}\\&\quad+\sup_{\varphi\in\mathcal {F}(\Omega_1)} \Big\{\int_{E\setminus (E\cap F)}\mathrm{div}_{\la}\varphi(x)\du\Big\}\\ \nonumber
&=P_{\la}(E\cup F,\Omega_1)+ P_{\la}(E\setminus (E\cap F),\Omega_1)\\\nonumber
&=P_{\la}(E\cup F,\Omega_1).
\end{align}
Combining (\ref{eq11}) with (\ref{eqq12}) deduces that
\begin{equation*}
P_{\la}(E,\Omega_1)+P_{\la}(F,\Omega_1)\le P_{\la}(E\cup
F,\Omega_1)+P_{\la}( E\cap F,\Omega_1),
\end{equation*}which derives the desired result. Another case can be
similarly proved, we omit the details.
\end{proof}

Next we show that the Gauss-Green formula is valid on sets of finite
$\la$-perimeter.
\begin{theorem}(Gauss-Green formula).\label{thm-3} Let $E\subseteq\Omega$ be subset with finite $\la$-perimeter. Then we have
\begin{align*}
    &\int_{E}\mathrm{\widetilde{div}}_{\la}\varphi\left(x\right)d\mu_{\al}\left(x\right)\\
    =&-\int_{\partial{E^c}}(\sqrt{x_1}\varphi_{1}(x),\cdots,\sqrt{x_d}\varphi_{d}(x))\cdot\vec{n}\omega(x)d\mathcal{H}^{d-1}(x)\\&
    -\int_{E}\sum_{i=1}^{d}\frac{\al_i+\frac{1}{2}-x_i}{\sqrt{x_i}}\varphi_{i}(x)\omega(x)dx,
\end{align*}
where the unit vector $\vec{n}(x)$ is the outward normal to $E$ and
$\mathrm{\widetilde{div}}_{\la}(\cdot)$ is defined in
(\ref{eqq7}).
\end{theorem}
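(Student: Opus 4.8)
The plan is to deduce the weighted Gauss--Green identity from the classical divergence theorem for sets of finite perimeter, after recording an exact pointwise identity between the twisted divergence $\di$ and the Euclidean divergence of a weighted vector field. It suffices to treat $\varphi\in C_c^1(\Omega,\mathbb R^d)$.

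First I would set
\[
\Psi(x):=\omega(x)\bigl(\sqrt{x_1}\,\varphi_1(x),\dots,\sqrt{x_d}\,\varphi_d(x)\bigr),
\]
which belongs to $C_c^1(\Omega,\mathbb R^d)$ since $\operatorname{supp}\varphi$ is a compact subset of the open orthant $\mathbb R_{+}^{d}$. Using $\partial_{x_i}\omega=\bigl(\tfrac{\alpha_i}{x_i}-1\bigr)\omega$ together with the product rule, a short computation gives for each $i$
\[
\partial_{x_i}\Psi_i=\omega\,\delta_i\varphi_i+\omega\,\frac{\alpha_i+\tfrac12-x_i}{\sqrt{x_i}}\,\varphi_i ,
\]
and hence, since $\di\varphi=\sum_i\delta_i\varphi_i$ and $\da=\omega\,dx$,
\[
\mathrm{div}\,\Psi=\omega\,\di\varphi+\omega\sum_{i=1}^{d}\frac{\alpha_i+\tfrac12-x_i}{\sqrt{x_i}}\,\varphi_i ,\qquad\text{equivalently}\qquad \omega\,\dv\varphi=-\mathrm{div}\,\Psi .
\]
Integrating the first identity over $E$ reduces the theorem to the ordinary Gauss--Green formula $\int_E\mathrm{div}\,\Psi\,dx=\int_{\partial E}\Psi\cdot\vec n\,d\mathcal H^{d-1}$ applied to $\Psi$, whose boundary term is precisely $-\int_{\partial E^c}(\sqrt{x_1}\varphi_1,\dots,\sqrt{x_d}\varphi_d)\cdot\vec n\,\omega\,d\mathcal H^{d-1}$ with the orientation fixed in the statement.

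To apply that classical formula one needs $E$ to be of locally finite classical perimeter in $\Omega$, and this is exactly where the hypothesis $P_{\la}(E,\Omega)<\infty$ is used. Given $\psi\in C_c^1(\Omega,\mathbb R^d)$ with $\|\psi\|_\infty\le1$, on the compact set $\operatorname{supp}\psi\subset\Omega\subset\mathbb R_{+}^{d}$ the weight $\omega$ and the factors $\sqrt{x_i}$ are bounded above and below by positive constants, so $\widetilde\varphi:=\bigl(\psi_i/(\sqrt{x_i}\,\omega)\bigr)_{i=1}^{d}$ lies in $C_c^1(\Omega,\mathbb R^d)$ and $\widetilde\varphi/M\in\mathcal F(\Omega)$ for a suitable constant $M=M(\operatorname{supp}\psi)$. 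Since the field $\Psi$ built from $\widetilde\varphi$ is just $\psi$, the identity $\omega\,\dv\varphi=-\mathrm{div}\,\Psi$ gives $\bigl|\int_E\mathrm{div}\,\psi\,dx\bigr|=\bigl|\int_E\dv\widetilde\varphi\,\da\bigr|\le M\,P_{\la}(E,\Omega)$. Taking the supremum over $\psi$ supported in a fixed compact set shows that $E$ has finite perimeter in every $\Omega'\Subset\Omega$, so the De Giorgi structure theorem supplies the reduced boundary, the measure-theoretic outer unit normal $\vec n$, and the Gauss--Green identity above for the compactly supported field $\Psi$; combined with the previous paragraph, this yields the theorem.

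I expect the transfer from finite $\la$-perimeter to finite classical perimeter to be the only genuinely delicate point; everything hinges on the fact that every admissible test field is compactly supported inside $\mathbb R_{+}^{d}$, away from the coordinate hyperplanes, where $\omega$ and the anisotropic factors $\sqrt{x_i}$ are comparable to constants, so that the Laguerre perimeter and the Euclidean perimeter are locally equivalent there. Once $E$ is known to be a set of (locally) finite perimeter, the remainder is the routine differentiation of $\Psi$ and a direct appeal to the classical divergence theorem, the weight and the anisotropy being absorbed entirely into the field $\Psi$.
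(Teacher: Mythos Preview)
Your argument is essentially the paper's own: both introduce the weighted field $\Psi=\omega(\sqrt{x_1}\varphi_1,\dots,\sqrt{x_d}\varphi_d)$, compute $\mathrm{div}\,\Psi=\omega\,\di\varphi+\omega\sum_i\frac{\alpha_i+\tfrac12-x_i}{\sqrt{x_i}}\varphi_i$ via $\partial_{x_i}\omega=(\alpha_i/x_i-1)\omega$, and then invoke the classical Gauss--Green formula for $\Psi$. You are more careful than the paper in one respect: the paper simply asserts that the classical divergence theorem applies, whereas you supply the argument that finite $\la$-perimeter forces $E$ to have locally finite Euclidean perimeter in $\Omega$ (by rescaling test fields through the locally bounded, locally positive weight), which is the only nontrivial point.
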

\begin{proof} By calculating, we have
    \begin{align*}
    &\int_{E}\mathrm{\widetilde{div}}_{\la}\varphi\left(x\right) d\mu_{\al}\left(x\right)\\
    &=\int_{E}\left(\sum_{i=1}^{d}\sqrt{x_i}\frac{\partial}{\partial{x_i}}\varphi_{i}(x)\right)\omega(x)dx\\
    &=\int_{E}\mathrm{div}(\sqrt{x_1}\varphi_{1}(x)\omega(x),\cdots,\sqrt{x_d}\varphi_{d}(x)\omega(x)     )dx-\int_{E}\sum_{i=1}^{d}\sqrt{x_i}\varphi_{i}(x)\frac{\partial}{\partial{x_i}}\omega(x)dx\\
    &\quad-\frac{1}{2}\int_{E}\sum_{i=1}^{d}\frac{1}{\sqrt{x_i}}\varphi_{i}(x)\omega(x)dx\\
    &=-\int_{\partial{E^c}}(\sqrt{x_1}\varphi_{1}(x),\cdots,\sqrt{x_d}\varphi_{d}(x))\cdot\vec{n}\omega(x)d\mathcal{H}^{d-1}(x)\\&\quad-\int_{E}\sum_{i=1}^{d}\sqrt{x_i}\varphi_{i}(x)\frac{\partial}{\partial{x_i}}\omega(x)dx
    -\frac{1}{2}\int_{E}\sum_{i=1}^{d}\frac{1}{\sqrt{x_i}}\varphi_{i}(x)\omega(x)dx\\
    &=-\int_{\partial{E^c}}(\sqrt{x_1}\varphi_{1}(x),\cdots,\sqrt{x_d}\varphi_{d}(x))\cdot\vec{n}\omega(x)d\mathcal{H}^{d-1}(x)\\
    &\quad-\int_{E}\sum_{i=1}^{d}\frac{\al_i+\frac{1}{2}-x_i}{\sqrt{x_i}}\varphi_{i}(x)\omega(x)dx,
    \end{align*}
    where we have used the classical Gauss-Green formula and the following facts for the derivatives of $\omega(x)$:
    \begin{align*}
    \frac{\partial}{\partial{x_i}}\left(\prod_{j=1}^{d}\frac{{x_j}^{\alpha_j}e^{-x_j}}{\Gamma(\alpha_j+1)}\right)
    &=\prod_{j=1,j\neq i}^{d}\frac{{x_j}^{\alpha_j}e^{-x_j}}{\Gamma(\alpha_j+1)}\frac{1}{\Gamma(\alpha_i+1)}(-e^{-x_i}{x_i}^{\al_i}+\al_i e^{-x_i}{x_i}^{\al_i-1})\\
    &=\Big(-1+\frac{\al_i}{x_i}\Big)\omega(x)
    \end{align*}
for $1\le i\le d$. This completes the proof.
\end{proof}

\begin{lemma}\label{lem-5.1}
    For any set $E$ in $\Omega$ with finite $\la$-perimeter, then
    \begin{equation*}
       {P}_{\la}(E,\Omega)={P}_{\la}(\Omega\backslash E, \Omega).
    \end{equation*}
\end{lemma}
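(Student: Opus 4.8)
The plan is to exploit the fact that the constant field $\varphi$ plays no role in the supremum defining $P_{\la}$ because the test fields are compactly supported, so the only obstruction to the naive identity $P(E)=P(E^c)$ from the Euclidean setting is the extra zeroth-order term coming from the weight $\omega$. Concretely, write $\mathbf 1_E + \mathbf 1_{\Omega\setminus E} = \mathbf 1_\Omega$ pointwise on $\Omega$, so that for every $\varphi\in\mathcal F(\Omega)$,
\[
\int_{E}\dv\varphi(x)\,\du + \int_{\Omega\setminus E}\dv\varphi(x)\,\du
= \int_{\Omega}\dv\varphi(x)\,\du .
\]
The first step is therefore to show that the right-hand side vanishes for every admissible $\varphi$. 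Using the definition $\dv\varphi=\sum_i\delta_i^*\varphi_i$ and $\delta_i^*=-\sqrt{x_i}(\partial_{x_i}+\tfrac{\alpha_i+\frac12-x_i}{x_i})$, one rewrites $\int_\Omega \dv\varphi\,\du$; since $\varphi\in C_c^1(\Omega,\R^d)$, an integration by parts in each variable $x_i$ produces no boundary contribution, and the weight-derivative terms are exactly arranged (as in the computation in Theorem \ref{thm-3}, where $\partial_{x_i}\omega=(-1+\alpha_i/x_i)\omega$) so that $\int_\Omega\dv\varphi(x)\,d\mu_\alpha(x)=0$. Equivalently, $\dv$ is the formal adjoint of $\nl$ in $L^2(\Omega,d\mu_\alpha)$, and $\nl$ annihilates the constant function $\mathbf 1$, so $\langle \dv\varphi,\mathbf 1\rangle = \langle \varphi,\nl\mathbf 1\rangle = 0$.

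Granting this, the displayed identity becomes
\[
\int_{E}\dv\varphi(x)\,\du = -\int_{\Omega\setminus E}\dv\varphi(x)\,\du
\quad\text{for all }\varphi\in\mathcal F(\Omega).
\]
The second step is to take the supremum over $\varphi\in\mathcal F(\Omega)$ on both sides: the class $\mathcal F(\Omega)$ is symmetric under $\varphi\mapsto-\varphi$ (since $\|-\varphi\|_{L^\infty}=\|\varphi\|_{L^\infty}$), so
\[
P_{\la}(E,\Omega)=\sup_{\varphi}\int_E\dv\varphi\,\du
=\sup_{\varphi}\Bigl(-\int_{\Omega\setminus E}\dv\varphi\,\du\Bigr)
=\sup_{\varphi}\int_{\Omega\setminus E}\dv(-\varphi)\,\du
=P_{\la}(\Omega\setminus E,\Omega),
\]
which is the claim.

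The only genuine point requiring care — the ``main obstacle'' — is the first step, namely verifying rigorously that $\int_\Omega\dv\varphi\,d\mu_\alpha=0$ for every $\varphi\in C_c^1(\Omega,\R^d)$. One must check that no boundary term appears when integrating by parts: because $\varphi$ is compactly supported in the \emph{open} set $\Omega\subseteq\R_+^d$, its support stays away from $\partial\Omega$ (in particular away from the coordinate hyperplanes $\{x_i=0\}$ where $\omega$ degenerates), so Fubini's theorem together with the one-dimensional fundamental theorem of calculus applies in each variable and kills the boundary contributions cleanly. If one prefers, this is precisely the special case $E=\Omega$ of the Gauss–Green formula in Theorem \ref{thm-3}, where $\partial(\Omega^c)\cap\Omega=\emptyset$ removes the surface integral and the remaining identity $\int_\Omega\widetilde{\dv}_{\la}\varphi\,d\mu_\alpha = -\int_\Omega\sum_i\frac{\alpha_i+\frac12-x_i}{\sqrt{x_i}}\varphi_i\,\omega\,dx$ combines with $\dv = \widetilde{\dv}_{\la} - \sum_i\frac{\alpha_i+\frac12-x_i}{\sqrt{x_i}}\varphi_i$ (read off from the definitions of $\delta_i$ and $\delta_i^*$) to give $\int_\Omega\dv\varphi\,d\mu_\alpha=0$. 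Everything else is the elementary symmetrization argument above.
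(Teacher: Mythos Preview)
Your argument is correct and takes a genuinely different, more elementary route than the paper. The paper applies its Gauss--Green formula (Theorem~\ref{thm-3}) twice, once to $E$ and once to $\Omega\setminus E$, rewriting each volume integral $\int_E\dv\varphi\,\du$ and $\int_{\Omega\setminus E}\dv\varphi\,\du$ as a surface integral over the common boundary $\partial E=\partial(\Omega\setminus E)$; since the outward normals are opposite, the two are negatives of one another, and the symmetry of the supremum finishes the proof. You instead bypass boundary integrals entirely: from the duality $\langle\dv\varphi,1\rangle_{L^2(\du)}=\langle\varphi,\nl 1\rangle=0$ (valid for every $\varphi\in C_c^1(\Omega,\R^d)$ because the support stays away from $\partial\Omega$) you get $\int_\Omega\dv\varphi\,\du=0$ directly, hence $\int_E\dv\varphi=-\int_{\Omega\setminus E}\dv\varphi$, and then the $\varphi\mapsto-\varphi$ symmetry of $\mathcal F(\Omega)$ gives the result. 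Your approach requires no regularity of $\partial E$ whatsoever, whereas the paper's route implicitly leans on the classical Gauss--Green theorem inside Theorem~\ref{thm-3} and so tacitly assumes some smoothness of the boundary; conversely, the paper's computation makes visible that the perimeter is concentrated on $\partial E$. One minor slip in your parenthetical alternative: from $\delta_i^*=-\sqrt{x_i}\,\partial_{x_i}-\tfrac{\alpha_i+\frac12-x_i}{\sqrt{x_i}}$ the correct relation is $\dv\varphi=-\widetilde{\mathrm{div}}_{\la}\varphi-\sum_i\tfrac{\alpha_i+\frac12-x_i}{\sqrt{x_i}}\varphi_i$ (minus, not plus, on the first term), but this does not affect your main duality argument.
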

\begin{proof} For any $\varphi\in \mathcal {F}(\R_{+}^{d})$, since ${P}_{\la}(E,\Omega)<\infty$,
then $$\sup_{\varphi\in\mathcal
{F}(\R_{+}^{d})}\int_{E}{\mathrm{div}}_{\la}\varphi(x)\du<\infty.$$
Via the extended Gauss-Green formula (Theorem \ref{thm-3}) and
noting the compact support of $\varphi$, we have
\begin{align*}
  &\int_E\mathrm{div}_{\la}\varphi(x)\du\\
  &=-\int_E\widetilde{\mathrm{div}}_{\la} (\varphi_1(x),\ldots,\varphi_d(x))\du-
  \int_{E}\sum^d_{k=1}\frac{\al_i+\frac{1}{2}-x_i}{\sqrt{x_i}}\varphi_i(x)\du\\
  &=\int_{\partial{E^c}}(\sqrt{x_1}\varphi_{1}(x),\cdots,\sqrt{x_d}\varphi_{d}(x))\cdot\vec{n}\omega(x)d\mathcal{H}^{d-1}(x)\\
  &\quad+\int_{E} \sum^d_{k=1}\frac{\al_i+\frac{1}{2}-x_i}{\sqrt{x_i}}\varphi_i(x)\du-\int_{E} \sum^d_{k=1}\frac{\al_i+\frac{1}{2}-x_i}{\sqrt{x_i}}\varphi_i(x)\du\\
  &=\int_{\partial{E}}(\sqrt{x_1}\varphi_{1}(x),\cdots,\sqrt{x_d}\varphi_{d}(x))\cdot\vec{n}\omega(x)d\mathcal{H}^{d-1}(x)\\
  &=-\int_{E^c}\widetilde{\mathrm{div}}_{\la}\varphi(x)\du-\int_{E^c} \sum^d_{k=1}\frac{\al_i+\frac{1}{2}-x_i}{\sqrt{x_i}}\varphi_i(x)\du\\
  &=\int_{E^c}\mathrm{div}_{\la}\varphi(x)\du,
  \end{align*}
where $\vec{n}(x)$ is the unit exterior normal to $E$ at $x$. Due to the arbitrariness
of $\varphi$, taking the supremum reaches
$${P}_{\la}( E,\Omega)={P}_{\la}(\Omega\backslash E,\Omega).$$
\end{proof}

\subsection{Coarea formula of $\la$-BV functions and the Sobolev inequality}\label{subsec3.3}
Below we prove the coarea formula and the Sobolev inequality for
$\la$-perimeter.
\begin{theorem}\label{coarea formula}
   Let $\Omega_1$ be an open set defined in (\ref{eq3}). If $f\in\bu$, then
    \begin{equation}\label{coarea formula-1}
    |\nabla_{\la}f|(\Omega_1)\approx\int_
    {-\infty}^{+\infty}P_{\la}(E_t,\Omega_1)dt,
    \end{equation}
where $E_t=\{x\in\Omega_1: f(x)>t\}$ for $t\in\mathbb{R}$.
\end{theorem}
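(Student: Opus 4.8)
The plan is to run the De Giorgi--Fleming--Rishel argument in the Laguerre setting, proving the two inequalities hidden in $(\ref{coarea formula-1})$ separately; as in the Euclidean case, both should come out with constant $1$. For the bound $|\nl f|(\Omega_1)\le\int_{-\infty}^{+\infty}P_{\la}(E_t,\Omega_1)\,dt$ I would use the layer-cake representation
\[
 f(x)=\int_0^{+\infty}1_{E_t}(x)\,dt-\int_{-\infty}^{0}1_{\Omega_1\setminus E_t}(x)\,dt .
\]
For any $\varphi\in\mathcal F(\Omega_1)$ one has $\int_{\Omega_1}\dv\varphi(x)\du=0$, since $\delta_i^{*}$ is the adjoint of $\delta_i$ and $\delta_i 1=0$ (equivalently, apply Theorem \ref{thm-3} to $E=\Omega_1$ and use that $\varphi$ has compact support in $\Omega_1$). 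As $\mu_\alpha$ is a probability measure and $f\in L^{1}(\Omega_1,\da)$, Fubini's theorem is applicable and yields
\[
 \int_{\Omega_1}f(x)\dv\varphi(x)\du=\int_{-\infty}^{+\infty}\Big(\int_{E_t}\dv\varphi(x)\du\Big)\,dt .
\]
Bounding each inner integral by $P_{\la}(E_t,\Omega_1)$ via $(\ref{def-hp})$ and taking the supremum over $\varphi\in\mathcal F(\Omega_1)$ establishes this half.

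For the reverse inequality I would first suppose $f\in C^{\infty}(\Omega_1)$. On $\Omega_1$ we have $\nl f=(\sqrt{x_1}\partial_{x_1}f,\dots,\sqrt{x_d}\partial_{x_d}f)$, so that $|\nl f(x)|=|\nabla f(x)|\big(\sum_i x_i n_i(x)^{2}\big)^{1/2}$ whenever $n(x):=\nabla f(x)/|\nabla f(x)|$ is defined. Applying the classical coarea formula for Lipschitz functions with the nonnegative weight $x\mapsto 1_{\Omega_1}(x)\big(\sum_i x_i n_i(x)^{2}\big)^{1/2}\omega(x)$ gives
\[
 \int_{\Omega_1}|\nl f(x)|\du=\int_{-\infty}^{+\infty}\Big(\int_{f^{-1}(t)\cap\Omega_1}\Big(\sum_i x_i n_i^{2}\Big)^{1/2}\omega\,d\mathcal H^{d-1}\Big)\,dt .
\]
By Sard's theorem, for a.e.\ $t$ the level set $f^{-1}(t)\cap\Omega_1=\partial E_t\cap\Omega_1$ is a smooth hypersurface with unit normal $\pm n$, and then the Gauss--Green formula (Theorem \ref{thm-3}), combined with the optimal admissible choice $\varphi_i=\sqrt{x_i}\,n_i\big/\big(\sum_j x_j n_j^{2}\big)^{1/2}$ in $(\ref{def-hp})$, identifies $P_{\la}(E_t,\Omega_1)=\int_{\partial E_t\cap\Omega_1}\big(\sum_i x_i n_i^{2}\big)^{1/2}\omega\,d\mathcal H^{d-1}$. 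Comparing the last two displays proves $(\ref{coarea formula-1})$, with equality, for smooth $f$.

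For general $f\in\bu$ I would invoke the approximation Theorem \ref{th2.5} to obtain $f_h\in\bu\cap C_c^{\infty}(\Omega_1)$ with $f_h\to f$ in $L^{1}(\Omega_1,\da)$ and $\int_{\Omega_1}|\nl f_h|\du\to|\nl f|(\Omega_1)$. Since $\int_{\Omega_1}|f_h-f|\du=\int_{-\infty}^{+\infty}\|1_{\{f_h>t\}}-1_{E_t}\|_{L^{1}(\Omega_1,\da)}\,dt$, along a subsequence $1_{\{f_h>t\}}\to 1_{E_t}$ in $L^{1}(\Omega_1,\da)$ for a.e.\ $t$; lower semicontinuity of the $\la$-perimeter (Corollary \ref{semicontinuity-2}) gives $P_{\la}(E_t,\Omega_1)\le\liminf_h P_{\la}(\{f_h>t\},\Omega_1)$ for a.e.\ $t$, and Fatou's lemma together with the smooth case yields
\[
 \int_{-\infty}^{+\infty}P_{\la}(E_t,\Omega_1)\,dt\le\liminf_h\int_{-\infty}^{+\infty}P_{\la}(\{f_h>t\},\Omega_1)\,dt=\liminf_h\int_{\Omega_1}|\nl f_h|\du=|\nl f|(\Omega_1) .
\]

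The main obstacle is the smooth case, specifically the passage from the variational definition $(\ref{def-hp})$ of $P_{\la}(E_t,\Omega_1)$ to the anisotropic surface integral $\int_{\partial E_t\cap\Omega_1}(\sum_i x_i n_i^{2})^{1/2}\omega\,d\mathcal H^{d-1}$: this needs Theorem \ref{thm-3}, an approximation of the non-smooth, $n$-dependent optimal field by admissible $\varphi\in C_c^{1}(\Omega_1,\R^{d})$ with $\|\varphi\|_{L^\infty}\le1$, and Sard's theorem to ensure that a.e.\ level set is regular so that Gauss--Green applies. One must also check the hypotheses behind Theorem \ref{th2.5}, which is automatic for $d\ge3$ by Remark \ref{rem2.1} and otherwise guaranteed by the integrability $(\ref{eq9})$ --- this is precisely why the statement is phrased on $\Omega_1$ rather than on a general $\Omega$. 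Since both halves carry constant $1$, the argument in fact yields equality in $(\ref{coarea formula-1})$; the weaker assertion $\approx$ leaves room to replace the sharp Gauss--Green identification by cruder two-sided bounds on $P_{\la}(E_t,\Omega_1)$ if one prefers.
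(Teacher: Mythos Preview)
Your argument is correct, but the route you take for the reverse inequality in the smooth case is genuinely different from the paper's. The paper does not invoke the weighted Euclidean coarea formula or identify $P_{\la}(E_t,\Omega_1)$ with the anisotropic surface integral $\int_{\partial E_t\cap\Omega_1}(\sum_i x_i n_i^{2})^{1/2}\omega\,d\mathcal H^{d-1}$. Instead, for smooth $f$ it runs a Miranda-style truncation argument: it sets $m(t)=\int_{\{f\le t\}}|\nl f|\,\da$, approximates $1_{E_t}$ in $L^{1}$ by $v_h(x)=g_h(f(x))$ with $g_h$ a piecewise-linear cutoff, computes $\int_{\Omega_1}|\nl v_h|\,\da$ explicitly, and uses lower semicontinuity (Lemma~\ref{lem 1}(ii)) to obtain $P_{\la}(E_t,\Omega_1)\le m'(t)$ for a.e.\ $t$; integrating in $t$ gives $\int_{-\infty}^{+\infty}P_{\la}(E_t,\Omega_1)\,dt\lesssim\int_{\Omega_1}|\nl f|\,\da$.

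Your approach buys sharpness: it yields equality in $(\ref{coarea formula-1})$ rather than merely $\approx$, since both halves carry constant $1$. The price is the ``main obstacle'' you flag --- the identification of the variational perimeter with the surface integral requires an approximation of the non-smooth, $n$-dependent optimal field by admissible $C_c^{1}$ vector fields, together with Sard's theorem. The paper's route avoids this entirely by staying on the functional side (no Gauss--Green, no optimal $\varphi$), at the cost of the sharp constant. The first half of your argument and the passage from smooth to general $f\in\bu$ via Theorem~\ref{th2.5}, Corollary~\ref{semicontinuity-2}, and Fatou match the paper's treatment.
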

\begin{proof}
    Firstly, assume
    \[\varphi=(\varphi_1,\varphi_2,\ldots,\varphi_d)\in C_c^1(\Omega_1,\R^{d}).\] We can
    easily prove that for $i=1,2,\ldots,d$,
    $$\int_{\Omega_1} f(x)\sqrt{x_i}\frac{\partial}{\partial{x_i}}\varphi_i(x)\du=\int_{ -\infty}^{+\infty}\Big(\int_{E_t}\sqrt{x_i}\frac{\partial}{\partial{x_i}}\varphi_i(x)\du\Big)dt,$$
 and
    $$\int_{\Omega_1}
    f(x)\frac{\alpha_{i}+\frac{1}{2}-x_i}{\sqrt{x_i}}\varphi_i(x)\du=
    \int_{-\infty}^{+\infty}\Big(\int_{E_t}\frac{\alpha_{i}+\frac{1}{2}-x_i}{\sqrt{x_i}}\varphi_{i}(x)\du\Big)dt,$$
where the latter can be seen in the proof of \cite [Section 5.5, Theorem 1]{EG}. It follows that
    $$\int_{\Omega_1}f(x)\mathrm {div}_{\la}\varphi(x)
    \du=\int^{+\infty}_{-\infty}\Big(\int_{E_t}
    \mathrm{div}_{\la}\varphi(x)\du\Big)dt.$$
    Therefore, we conclude that for all $\varphi\in
    \mathcal{F}(\Omega_1)$,
    \[\int_{\Omega_1}f(x)\mathrm {div}_{\la}\varphi(x)
    \du \le\int_{-\infty}^{+\infty}P_{\la}(E_t,\Omega_1
    )dt.\] Furthermore, \[|\nabla_{\la}f
    |(\Omega_1)\le\int_{-\infty}^{+\infty}{P_{\la}(E_t,\Omega_1)dt}.
    \]

    Secondly, without loss of generality, we only need to verify that
    \[|\nabla_{\la}f|(\Omega_1)\ge\int_{-\infty}^{+\infty}{P_{\la}(E_t,\Omega_1)dt}
    \] holds for $f\in\mathcal{BV}_{\la}(\Omega_1)\bigcap C^\infty(\Omega_1)$. This proof can refer to the idea of
    \cite[Proposition 4.2]{M}. Let
    \begin{equation*}\label{eqq2.8}
    m(t)=\int_{\{x\in\Omega_1:\ f(x)\le t\}}\Big|\sum_{i=1}^{d}\sqrt{x_i}\frac{\partial }{\partial{x_i}}f(x)\Big|\du.
    \end{equation*}
    It is obvious that
    \begin{equation*}
    \int^{+\infty}_{-\infty}m'(t)dt=\int_{\Omega_1}\Big|\sum_{i=1}^{d}\sqrt{x_i}\frac{\partial }{\partial{x_i}}f(x)\Big|\du.
    \end{equation*}

    Define the following function $g_h$ as
    $$g_h(s):=
    \begin{cases}
    0, &\mathrm{if} \ s\le t,\\
    h(s-t), &\mathrm{if}\ t\le s\le t+1/h,\\
    1, &\mathrm{if} \ s\ge t+1/h,
    \end{cases}$$
    where $t\in \mathbb{R}$. Set the sequence ${v_h}(x):=g_h(f(x))$.
    At this time, ${v_h} \to {1_{{E_t}}}$ in $L^1(\Omega_1,d\mu_{\al})$. In fact,
    \begin{align*}
    \int_{\Omega_1} |v_h(x)-1_{E_t}|\du
    &=\int_{\{x\in\Omega_1:t<f(x)\le t + 1/h\}}|g_h(f(x))-1|\du\\
    &\le \int_{\{x\in\Omega_1:t<f(x)\le t + 1/h\}}\du\to 0.
    \end{align*}
    Since $\{x\in\Omega_1: t<f(x)\le t+1/h\}\rightarrow\emptyset$ as
    $h\rightarrow \infty$, we obtain
    \begin{align*}
        &\int_{\Omega_1}|\nabla_{\la}{v_h}(x)|\du\\
        &=\int_{\{x\in\Omega_1:t<f(x)\le t + {1/h}\}}|\nabla_{\la}(h(f(x)-t))|\du\\ &\quad
        +\int_{\{x \in\Omega_1:f(x)\ge t+1/h\}}|\nabla_{\la}1|\du\\
        &= h\int_{\{x\in\Omega_1:t<f(x)\le t + {1/h}\}}
        \Big|\sum_{i=1}^{d}\sqrt{x_i}\frac{\partial}{\partial{x_i}}f(x)\Big|\du.\\
    \end{align*}
    Taking the limit with $h \rightarrow \infty$ and using Theorem \ref
    {th2.5}, we obtain
    \begin{equation}
        \begin{split}\label{eq-3.10}
        |\nabla_{\la}1_{E_t}|(\Omega_1)
        &\le\mathop{\lim \sup}\limits_{h \to \infty} \int_{\Omega_1}|
        \nabla_{\la}v_h(x)|\du\\
        &=h\mathop{\lim \sup}\limits_{h \to \infty}\int_{\{x\in\Omega_1:t<f(x)\le t + {1/h}\}}
        \Big|\sum_{i=1}^{d}\sqrt{x_i}\frac{\partial}{\partial{x_i}}f(x)\Big|\du\\
        &=m'(t).
        \end{split}
    \end{equation}
    Integrating (\ref {eq-3.10}) reaches
    \begin{align*}
    \int_{-\infty }^{+\infty}P_{\la}(E_t,\Omega_1)dt
    &\le \int_{-\infty}^{+\infty}m'(t)dt\\
    &=\int_{\Omega_1}\Big|\sum_{i=1}^{d}\sqrt{x_i}\frac{\partial}{\partial{x_i}}f(x)\Big|\du\\
    &\lesssim\int_{\Omega_1}|\nabla_{\la}f(x)|\du.
    \end{align*}

Finally, by approximation and using the lower semicontinuity of the
$\la$-perimeter, we conclude that (\ref{coarea formula-1}) holds
true for all $f\in\mathcal {BV}_{\la}(\Omega_1)$.
\end{proof}

Finally, we can develop the Sobolev inequality and the isoperimetric
inequality for $\la$-BV functions. Since the domain $\Omega_{1}$ is
a reasonable substitute of $\Omega$, we can obtain the isoperimetric
inequality and the  Sobolev inequality for
$f\in\mathcal{BV}_{\la}(\Omega_{1})$, where $\Omega_{1}$ is given in
(\ref{eq3}).
\begin{theorem}\label{thm2.7}
\item{\rm (i)} (Sobolev inequality). Let $\Omega_1$ be an open set defined in (\ref{eq3}).
Then for all $f\in {\mathcal{BV}}_{\la}(\Omega_1)$, we have
\begin{equation}\label{eq12}
\|f\|_{L^{\frac{d}{d-1}}(\Omega_1,d\mu_{\al})}\lesssim |\nl
f|(\Omega_1).
\end{equation}

\item{\rm (ii)} (Isoperimetric inequality). Let $E$ be a bounded set of finite $\la$-perimeter in $\Omega_1$. Then
\begin{equation}\label{eq14}
{\mu_{\al}(E)}^{\frac{d-1}{d}}\lesssim
P_{\la}(E,\Omega_1).
\end{equation}
\item{\rm (iii)} The above two statements are equivalent.
\end{theorem}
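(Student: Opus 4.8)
The plan is to prove Theorem \ref{thm2.7} by first establishing (i), then deriving (ii) from (i), and finally establishing the equivalence (iii) by showing (ii) implies (i).

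\textbf{Proof of (i).} I would first reduce to smooth compactly supported functions: by Theorem \ref{th2.5}, given $f \in \bu$ there is a sequence $\{f_h\} \subset \bu \cap C_c^\infty(\Omega_1)$ with $f_h \to f$ in $L^1(\Omega_1, d\mu_\alpha)$ and $\int_{\Omega_1} |\nl f_h| \, du \to |\nl f|(\Omega_1)$. By passing to a subsequence converging $\mu_\alpha$-a.e.\ and applying Fatou's lemma to $\|f_h\|_{L^{d/(d-1)}}$, it suffices to prove (\ref{eq12}) for $f \in C_c^\infty(\Omega_1)$. The key observation is the pointwise comparison on $\Omega_1$: since every coordinate satisfies $\sqrt{x_i} \ge 1$ there, we have $|\nabla f(x)| \le |\nl f(x)|$ for all $x \in \Omega_1$ (this is the inequality flagged in the introduction, following from $\delta_i f = \sqrt{x_i}\,\partial_{x_i} f$ and $\sqrt{x_i}\ge 1$). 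Then I would write $\int_{\Omega_1} |\nl f(x)| \, d\mu_\alpha(x) = \int_{\Omega_1} |\nl f(x)|\,\omega(x)\,dx \ge \int_{\Omega_1} |\nabla f(x)|\,\omega(x)\,dx$ and, since $\omega$ is bounded below on bounded subsets of $\Omega_1$ but not globally, I would instead absorb the weight more carefully: apply the classical Gagliardo--Nirenberg--Sobolev inequality to the function $g = f\,\omega^{(d-1)/d}$ (extended by zero), whose support is compact in $\mathbb{R}^d$, giving $\|g\|_{L^{d/(d-1)}(dx)} \lesssim \|\nabla g\|_{L^1(dx)}$; expanding $\nabla g = \omega^{(d-1)/d}\nabla f + \tfrac{d-1}{d}f\,\omega^{-1/d}\nabla\omega$ and using $|\nabla\omega(x)| = |{-1} + \alpha_i/x_i|\,\omega(x)$ componentwise together with the defining constraint $\sqrt{x_i}\ge 1$ to control the lower-order term by $\int_{\Omega_1}|f|\,d\mu_\alpha$, which is itself $\lesssim |\nl f|(\Omega_1)$ via a Poincaré-type bound, yields (\ref{eq12}). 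The main obstacle here is exactly handling the weight $\omega$: the clean route is the substitution $g = f\omega^{(d-1)/d}$ so that $\|g\|_{L^{d/(d-1)}(dx)}^{d/(d-1)} = \|f\|_{L^{d/(d-1)}(\Omega_1, d\mu_\alpha)}^{d/(d-1)}$ exactly, reducing everything to the Euclidean inequality plus bookkeeping on $\nabla\omega$.

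\textbf{Proof of (ii) from (i).} This is immediate: take $f = 1_E$ in (\ref{eq12}). Then $\|1_E\|_{L^{d/(d-1)}(\Omega_1, d\mu_\alpha)} = \mu_\alpha(E)^{(d-1)/d}$ and $|\nl 1_E|(\Omega_1) = P_{\la}(E, \Omega_1)$ by the definition (\ref{def-hp}), so (\ref{eq14}) follows, provided $E$ has finite $\la$-perimeter so that $1_E \in \bu$; boundedness of $E$ ensures $1_E \in L^1(\Omega_1, d\mu_\alpha)$.

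\textbf{Proof of (iii).} One direction is (ii). For the converse, assume (\ref{eq14}) holds for all bounded sets of finite $\la$-perimeter, and let $f \in \bu$; by the reduction above we may take $f \in C_c^\infty(\Omega_1)$ and, splitting $f = f^+ - f^-$, may assume $f \ge 0$. Apply the coarea formula (Theorem \ref{coarea formula}) together with the layer-cake identity $\|f\|_{L^{d/(d-1)}(\Omega_1,d\mu_\alpha)} = \big\|\int_0^\infty 1_{E_t}\,dt\big\|_{L^{d/(d-1)}}$ where $E_t = \{f > t\}$; by Minkowski's integral inequality, $\|f\|_{L^{d/(d-1)}} \le \int_0^\infty \|1_{E_t}\|_{L^{d/(d-1)}}\,dt = \int_0^\infty \mu_\alpha(E_t)^{(d-1)/d}\,dt$. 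Applying the isoperimetric hypothesis (\ref{eq14}) to each $E_t$ (which is bounded since $f$ has compact support, and has finite $\la$-perimeter for a.e.\ $t$ by the coarea formula) gives $\int_0^\infty \mu_\alpha(E_t)^{(d-1)/d}\,dt \lesssim \int_0^\infty P_{\la}(E_t, \Omega_1)\,dt \approx |\nl f|(\Omega_1)$, which is (\ref{eq12}). This closes the loop, and the only delicate point is justifying that for a.e.\ $t$ the superlevel set $E_t$ is admissible in (\ref{eq14}), which is exactly what the coarea formula (\ref{coarea formula-1}) guarantees since its right-hand side is finite.
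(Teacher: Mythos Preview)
Your treatment of (ii) and of the implication (ii)$\Rightarrow$(i) in (iii) is correct and essentially matches the paper; where you invoke Minkowski's integral inequality on the layer-cake decomposition, the paper instead shows that $\chi(t)=\|\min\{t,f\}\|_{L^{d/(d-1)}(\Omega_1,d\mu_\alpha)}$ is Lipschitz with $\chi'(t)\le \mu_\alpha(E_t)^{(d-1)/d}$ and integrates, but these are two standard routes to the same bound $\|f\|_{L^{d/(d-1)}}\le \int_0^\infty \mu_\alpha(E_t)^{(d-1)/d}\,dt$.

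The genuine gap is in your proof of (i). The substitution $g=f\,\omega^{(d-1)/d}$ does give $\|g\|_{L^{d/(d-1)}(dx)}=\|f\|_{L^{d/(d-1)}(d\mu_\alpha)}$, but after applying the Euclidean Gagliardo--Nirenberg inequality you must bound $\int_{\Omega_1}\omega^{(d-1)/d}|\nabla f|\,dx$ (and the companion zero-order term) by $\int_{\Omega_1}|\nl f|\,d\mu_\alpha=\int_{\Omega_1}\omega\,|\nl f|\,dx$. This fails: the ratio $\omega^{(d-1)/d}/\omega=\omega^{-1/d}$ behaves like $\exp\big(\tfrac{1}{d}\sum_i x_i\big)$ as $|x|\to\infty$ in $\Omega_1$ and is therefore unbounded, while passing from $|\nabla f|$ to $|\nl f|$ contributes only the polynomial factor $\sqrt{x_i}$, which cannot absorb the exponential growth. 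The same weight mismatch blocks your control of the lower-order term, regardless of whether the unspecified ``Poincar\'e-type bound'' $\int_{\Omega_1}|f|\,d\mu_\alpha\lesssim|\nl f|(\Omega_1)$ is available. The paper sidesteps this entirely: it invokes a \emph{weighted} Gagliardo--Nirenberg--Sobolev inequality $\|f_k\|_{L^{d/(d-1)}(\Omega_1,d\mu_\alpha)}\lesssim\|\nabla f_k\|_{L^1(\Omega_1,d\mu_\alpha)}$ as an external ingredient and then applies only the pointwise comparison $|\nabla f|\le|\nl f|$ on $\Omega_1$; it never attempts to derive the weighted inequality from the unweighted Euclidean one.
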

\begin{proof}
(i) Choose
\begin{equation*}
    f_k\in C^\infty_c(\Omega_{1})\cap
    \mathcal{BV}_{\la}(\Omega_1), \ k=1,2,\ldots,
\end{equation*}
such that
\begin{equation*}
\begin{cases}
f_k\rightarrow f\ \hbox{in}\ L^1(\Omega_1,d\mu_{\al}),\\
\int_{\Omega_1}|\nabla_{\la}
f_k(x)|\du\rightarrow
\parallel\nabla_{\la}f\parallel(\Omega_1).
\end{cases}
\end{equation*}
Since $\Omega_{1}=\Omega\setminus \{x\in\mathbb{R}_{+}^d:\exists i\in 1,\cdots,d\ \mathrm{such}\ \mathrm{that}\ \sqrt{x_i}<1\}$,
 then for any $i=1,\cdots,d$, we obtain $\sqrt{x_i}\geq 1$. It is
 easy to see that
\begin{equation}\label{eq15}
|\nabla f(x)|\le|\nl f(x)|=\Bigg(\sum_{i=1}^{d}\Big(\sqrt{x_i}\frac{\partial}{\partial{x_i}}f(x)\Big)^2\Bigg)^{\frac{1}{2}}.
\end{equation}
Then by Fatou's lemma and the weighted Gagliardo-Nirenberg-Sobolev
inequality, we have
\begin{align*}
\|f\|_{L^{\frac{d}{d-1}}(\Omega_{1},d\mu_{\al})}
&\le\liminf_{k\rightarrow\infty}\|f_k\|_{L^{\frac{d}{d-1}}(\Omega_{1},d\mu_{\al})}\\
&\lesssim\lim_{k\rightarrow\infty}\|\nabla f\|_{L^1(\Omega_{1},d\mu_{\al})}\\
&\lesssim\lim_{k\rightarrow\infty}\|\nabla_{\la}f\|_{L^1(\Omega_{1},d\mu_{\al})}=|\nl f|(\Omega_{1}),
\end{align*}
where we have used the relation between the gradient $\nabla$ and the Laguerre gradient $\nl$ in (\ref{eq15}).

(ii) We can show that (\ref{eq14}) is valid via letting $f=1_E$ in (\ref{eq12}).

(iii) Apparently, (i)$\Rightarrow$(ii) has been proved.
In what follows, we prove (ii)$\Rightarrow$(i). Assume that $0\le f\in C^\infty_c(\Omega_1)$. By the coarea formula in Theorem \ref {coarea formula} and (ii), we have
\begin{equation*}
 \int_{\Omega_1} |\nabla_{\la}f(x)|\du
= \int_{0}^{+\infty}|
 \nabla_{\la} 1_{E_t}|(\Omega_1)\,dt
 \gtrsim\int_{0}^{+\infty}|E_t|^{\frac{d-1}{d}}dt,
 \end{equation*}
where $E_t=\big\{x\in\Omega_1:\ f(x)>t\big\}$. Let
\begin{equation*}
f_t=\min\{t,f\}\ \ \&\ \ \chi (t) =\left(\int_{\Omega_1} f_t^{\frac{d}{d-1}}(x)\du\right)^{\frac{d-1}{d}},\  \forall\ t\in\mathbb{R}.
\end{equation*}
It is easy to see that
\begin{equation*}
\lim_{t \rightarrow \infty}\chi(t)=\bigg(\int_{\Omega_1}
|f(x)|^{\frac{d}{d-1}}\du\bigg)^{\frac{d-1}{d}}.
\end{equation*}

In addition, we can check that $\chi(t)$ is nondecreasing on
$(0,\infty)$ and for $h>0$,
\begin{equation*}
0\le \chi(t+h)-\chi(t)\le \bigg(
\int_{\Omega_1}
|f_{t+h}(x)-f_t(x)|^{\frac{d}{d-1}}\du\bigg)^{\frac{d-1}{d}}\le
h|E_t|^{\frac{d-1}{d}}.
\end{equation*}
Then $\chi(t)$ is locally a Lipschitz function and
$\chi'(t)\le |E_t|^{\frac{d-1}{d}}$ for a.e. $t\in (0,\infty)$.
Hence,
\begin{align*}
\bigg(\int_{\Omega_1}|f(x)|^{\frac{d}{d-1}}\du\bigg)^
{\frac{d-1}{d}}=\int^\infty_0 \chi'(t)dt&\le\int^\infty_0 |E_t|^{\frac{d-1}{d}}dt\\
&\lesssim\int_{\Omega_1}|\nabla_{\la} f(x)|\du.
\end{align*}
For all $f\in {\mathcal{BV}}_{\la}(\Omega_1)$, we conclude that (\ref{eq12}) is valid by Theorem \ref{th2.5}.
\end{proof}

As a direct result of the proof of (i) in Theorem \ref{thm2.7}, we
can get the following corollary.
\begin{corollary}\label{prop2} Let $1<p<d$ and let $\Omega_1$ be an open set defined in (\ref{eq3}). For any $f\in W_{\la}^{1,1}(\Omega_{1})$ one has
\begin{equation}\label{equa19}
\|f\|_{L^{\frac{dp}{d-p}}(\Omega_{1},d\mu_{\al})}\lesssim\|\nl f\|_{L^p(\Omega_{1},d\mu_{\al})}.
\end{equation}

\end{corollary}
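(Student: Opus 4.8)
The plan is to derive \eqref{equa19} from the endpoint case \eqref{eq12} by the classical Gagliardo--Nirenberg power substitution, after a preliminary truncation that makes every manipulation legitimate. Put
\[\gamma:=\frac{p(d-1)}{d-p},\qquad q:=\frac{dp}{d-p},\qquad p':=\frac{p}{p-1};\]
the hypothesis $1<p<d$ gives at once $\gamma>1$, $\frac{\gamma d}{d-1}=q$, $(\gamma-1)p'=q$ and $\frac{d-1}{d}-\frac1{p'}=\frac1q$. We may assume $\|\nl f\|_{L^p(\Omega_1,d\mu_{\al})}<\infty$, since otherwise there is nothing to prove.

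First I would pass to a bounded function by truncation. For $N>0$ set $f_N:=\mathrm{sgn}(f)\min\{|f|,N\}$; the chain rule for $W_{\la}^{1,1}$-functions gives $f_N\in W_{\la}^{1,1}(\Omega_1)$ with $\nl f_N=1_{\{|f|\le N\}}\nl f$, whence $\|\nl f_N\|_{L^p(\Omega_1,d\mu_{\al})}\le\|\nl f\|_{L^p(\Omega_1,d\mu_{\al})}$. Because $\gamma>1$, the map $t\mapsto|t|^{\gamma}$ is $C^1$, so $g_N:=|f_N|^{\gamma}$ obeys $\nl g_N=\gamma|f_N|^{\gamma-1}\mathrm{sgn}(f_N)\,\nl f_N$; since $f_N$ is bounded and $\mu_{\al}$ is a finite measure, $g_N\in W_{\la}^{1,1}(\Omega_1)\subseteq\mathcal{BV}_{\la}(\Omega_1)$. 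Applying \eqref{eq12} to $g_N$, rewriting the variation as an integral via Lemma \ref{lem 1}(i), and then using Hölder's inequality with exponents $p$ and $p'$ together with $(\gamma-1)p'=q$, one gets
\begin{align*}
\Big(\int_{\Omega_1}|f_N(x)|^{q}\du\Big)^{\frac{d-1}{d}}
&=\|g_N\|_{L^{\frac{d}{d-1}}(\Omega_1,d\mu_{\al})}
\lesssim\gamma\int_{\Omega_1}|f_N(x)|^{\gamma-1}|\nl f_N(x)|\du\\
&\le\gamma\Big(\int_{\Omega_1}|f_N(x)|^{q}\du\Big)^{\frac1{p'}}\|\nl f\|_{L^p(\Omega_1,d\mu_{\al})}.
\end{align*}
As $f_N$ is bounded and $\mu_{\al}(\Omega_1)<\infty$, the factor $\big(\int_{\Omega_1}|f_N|^{q}\du\big)^{1/p'}$ is finite and may be cancelled; using $\frac{d-1}{d}-\frac1{p'}=\frac1q$ this produces $\|f_N\|_{L^{q}(\Omega_1,d\mu_{\al})}\lesssim\|\nl f\|_{L^p(\Omega_1,d\mu_{\al})}$ with a constant independent of $N$. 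Since $|f_N|\uparrow|f|$, the monotone convergence theorem then yields \eqref{equa19}.

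Every step is routine; the points requiring a little attention are the two chain rules — for the Lipschitz truncation $t\mapsto\mathrm{sgn}(t)\min\{|t|,N\}$ and for the $C^1$ power $t\mapsto|t|^{\gamma}$ — which are what guarantee $g_N\in\mathcal{BV}_{\la}(\Omega_1)$ so that \eqref{eq12} applies, together with the exponent bookkeeping. (One could instead avoid the power substitution altogether by arguing as in the proof of Theorem \ref{thm2.7}(i): on $\Omega_1$ one has $\sqrt{x_i}\ge1$, hence the pointwise bound $|\nabla f|\le|\nl f|$ of \eqref{eq15}, and then invoke the weighted $L^p$ Gagliardo--Nirenberg--Sobolev inequality for $d\mu_{\al}$ on $C_c^\infty(\Omega_1)$ followed by the same approximation scheme used there. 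The argument sketched above has the advantage of relying only on results already established in this paper.)
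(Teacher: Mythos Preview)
Your proposal is correct and follows essentially the same route as the paper: apply the endpoint Sobolev inequality \eqref{eq12} to the power $|f|^{\gamma}$ with $\gamma=\frac{p(d-1)}{d-p}$, then use H\"older's inequality with exponents $p$ and $p'$ to absorb the factor $\big(\int|f|^{q}\big)^{1/p'}$. The paper's proof omits the truncation step and the justification that $|f|^{\gamma}\in\mathcal{BV}_{\la}(\Omega_1)$, so your version is in fact more carefully written than the original.
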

\begin{proof}For some $\gamma>1$ to be fixed later, via the Lemma \ref{lem 1} (i) and H\"{o}lder inequality we obtain
\begin{align*}
\Bigg(\int_{\Omega_1}&|f(x)|^{\frac{\gamma d}{d-1}}\du\Bigg)^{\frac{d-1}{d}}\\
&\lesssim\int_{\Omega_1}|
f(x)|^{\gamma-1}|\nabla_{\la} f(x)|\du\\
&\lesssim
\left(\int_{\Omega_1}|f(x)|^{\frac{p(\gamma-1)}{p-1}}\du\right)^{1-\frac{1}{p}}
\left(\int_{\Omega_1}|\nabla_{\la}f(x)|^p \du\right)^{\frac{1}{p}}.
\end{align*}
Choosing
$$\gamma=\frac{p(d-1)}{d-p}$$ and noting
$$\gamma-1=\frac{d(p-1)}{d-p},
$$
then we conclude that (\ref{equa19}) holds true.

\end{proof}

\section{Laguerre mean curvature}\label{sec-5}
In this section  we focus on the question whether  every set of
finite $\la$-perimeter in $\Omega_1\subseteq\mathbb R_{+}^{d}$ has
mean curvature in $L^{1}(\Omega_1,d\mu_{\al})$. For the classical
case, please refer to \cite{Barozzi} for details.  During the proof
of Theorem \ref{thm5-1}, we need to use the important result  for
the Laguerre perimeter in Corollary \ref{coro2.2}. Therefore, we
assume that the dimension $d\ge 3$ via Remark \ref{rem2.1}.

For a given $u\in L^{1}(\Omega_1,d\mu_{\al})$, the Massari type
functional corresponding to the $\la$-perimeter is defined as
\begin{equation*}\label{eqq5.1}
\mathscr{F}_{u,\la}(E):={P}_{\la}(E,\Omega_1)+\int_{E}u(x)\du,
\end{equation*}
where $E$ is an arbitrary set of finite $\la$-perimeter in $\mathbb
R_{+}^{d}$.

\begin{theorem}\label{thm5-1}
    For every set $E$ of finite $\la$-perimeter in $\mathbb
    R_{+}^{d}$, there exists a function $u\in L^{1}(\mathbb R_{+}^{d},d\mu_{\al})$ such that
    $$\mathscr{F}_{u,\la}(E)\leq \mathscr{F}_{u,\la}(F)$$
    holds for every set $F$ of finite $\la$-perimeter in $\Omega_1$.
\end{theorem}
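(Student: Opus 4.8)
The plan is to follow the classical construction of Barozzi--Gonzalez--Tamanini \cite{Barozzi}, adapted to the Laguerre setting, building the competitor weight $u$ from a sequence of isoperimetric-type minimizers. Fix the set $E$ with $P_{\la}(E,\Omega_1)<\infty$. For each $k\in\mathbb N$ consider the penalized functional
\[
\mathscr G_k(F):=P_{\la}(F,\Omega_1)+k\,\mu_{\al}(F\triangle E),
\]
over sets $F$ of finite $\la$-perimeter in $\Omega_1$. Using the lower semicontinuity of $P_{\la}$ (Corollary \ref{semicontinuity-2}) together with the compactness one obtains from the Sobolev embedding \eqref{eq12} applied to $1_F$ (which controls $\|1_F\|_{L^{d/(d-1)}}$, hence gives $L^1_{loc}$-compactness of a minimizing sequence), one produces a minimizer $E_k$ of $\mathscr G_k$. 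The point of the penalization is that $E_k\to E$ in $L^1(\Omega_1,d\mu_{\al})$ as $k\to\infty$ and $P_{\la}(E_k,\Omega_1)\to P_{\la}(E,\Omega_1)$; indeed $\mathscr G_k(E_k)\le\mathscr G_k(E)=P_{\la}(E,\Omega_1)$ forces $\mu_{\al}(E_k\triangle E)\le P_{\la}(E,\Omega_1)/k\to0$, and lower semicontinuity upgrades this to convergence of the perimeters.

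Next I would extract from the minimality of $E_k$ a quantitative "variational inequality" of the form
\[
P_{\la}(E_k,\Omega_1)-P_{\la}(F,\Omega_1)\le k\,\mu_{\al}(F\triangle E),
\]
and, comparing $E_k$ with $F=E_k\cap G$ and $F=E_k\cup G$ for arbitrary competitors $G$ and invoking the submodularity of the $\la$-perimeter from Corollary \ref{coro2.2} (this is exactly why we need $d\ge 3$, so that Theorem \ref{th2.5} and hence Corollary \ref{coro2.2} apply on all of $\mathbb R_{+}^d$ via Remark \ref{rem2.1}, and why the identity $P_{\la}(E,\Omega)=P_{\la}(\Omega\setminus E,\Omega)$ of Lemma \ref{lem-5.1} is available), derive one-sided density/comparison estimates. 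The candidate weight is then defined (up to sign on $E_k$ versus its complement) by something like
\[
u(x):=\sum_{k} \pm\,k\,1_{E_k\triangle E_{k+1}}(x)\quad\text{or}\quad u:=\lim_k u_k,
\]
where $u_k$ is the natural "curvature" of $E_k$, namely the density $u_k=\pm k$ on $E_k\triangle E$ in the appropriate regions; the delicate part is to arrange the signs so that $E$ itself becomes a minimizer of $\mathscr F_{u,\la}$, not merely a critical point.

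The $L^1$-bound on $u$ is the crux. One writes, telescoping,
\[
\int_{\Omega_1}|u|\,d\mu_{\al}\;\lesssim\;\sum_{k}\big(P_{\la}(E_k,\Omega_1)-P_{\la}(E_{k+1},\Omega_1)\big)+\big(\text{boundary/lower-order terms}\big),
\]
and here the Laguerre Gauss--Green formula (Theorem \ref{thm-3}) is used to absorb the first-order term $\int_E\sum_i\frac{\alpha_i+1/2-x_i}{\sqrt{x_i}}\varphi_i\,\omega\,dx$ that distinguishes $\widetilde{\mathrm{div}}_{\la}$ from $\mathrm{div}_{\la}$; the restriction to $\Omega_1$ guarantees $\sqrt{x_i}\ge1$ so these weights stay bounded and the comparison with the Euclidean perimeter in \eqref{eq15} is legitimate. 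The series telescopes because the $P_{\la}(E_k,\Omega_1)$ are monotone in $k$ (each $E_{k+1}$ is admissible in $\mathscr G_k$ up to the penalty), yielding a bound by $P_{\la}(E,\Omega_1)+C\,\mu_{\al}(\mathrm{spt})<\infty$. Finally, lower semicontinuity of $P_{\la}$ and dominated convergence (using the just-established $L^1$-bound as the dominating function) pass the minimality of $E_k$ for $\mathscr G_k$ to the limiting statement $\mathscr F_{u,\la}(E)\le\mathscr F_{u,\la}(F)$ for every competitor $F$. The main obstacle I anticipate is precisely the uniform $L^1$-estimate on $u$: one must control the "mass created" at each stage $k$ by the decrement of $\la$-perimeter plus the lower-order Laguerre terms, and this is where Corollary \ref{coro2.2}, Lemma \ref{lem-5.1}, and the Gauss--Green formula must be combined carefully, tracking the $|y_k|^{-1/2}$ and $|y_k|^{-3/2}$ weights exactly as in \eqref{eq9}.
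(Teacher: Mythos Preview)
Your overall plan---adapt Barozzi--Gonzalez--Tamanini via penalized minimizers, use submodularity (Corollary \ref{coro2.2}) and the complement identity (Lemma \ref{lem-5.1}), then telescope to get $u\in L^1$---matches the paper's strategy. But two concrete choices in your implementation diverge from the paper and leave real gaps.

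First, the paper does \emph{not} penalize with the symmetric difference. It first reduces to competitors $F\subset E$ (using Lemma \ref{lem-5.1} for the case $E\subset F$, and submodularity \eqref{eq5.3} for general $F$), then minimizes the one-sided functional
\[
\mathscr{F}_{\lambda}(F)=P_{\la}(F,\Omega_1)+\lambda\,\Lambda(E\setminus F),\qquad F\subset E,
\]
where $\Lambda(F)=\int_F h\,d\mu_\alpha$ for a fixed positive $h\in L^1(E,d\mu_\alpha)$. With a strictly increasing sequence $\lambda_i\uparrow\infty$ (with bounded increments $\lambda_{i+1}-\lambda_i\le c$), the minimizers $E_i$ are \emph{nested}: $E_i\subset E_{i+1}\subset E$. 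This monotonicity is what makes the definition
\[
u(x)=-\lambda_i\,h(x)\quad\text{on }E_i\setminus E_{i-1}
\]
unambiguous and makes the $L^1$ bound a genuine telescoping sum $\sum_i\lambda_i\Lambda(E_{i+1}\setminus E_i)\le P_{\la}(E,\Omega_1)$ plus $\sum_i(\lambda_{i+1}-\lambda_i)\Lambda(E_{i+1}\setminus E_i)\le c\,\Lambda(E)$. With your two-sided penalty $k\,\mu_\alpha(F\triangle E)$ the minimizers need not be nested, and your proposed weight ``$\sum_k \pm k\,1_{E_k\triangle E_{k+1}}$ or $\lim_k u_k$'' with unspecified signs is not a definition; nor is it clear why such a sum would be in $L^1$ or why $E$ (rather than the $E_k$) would end up minimizing $\mathscr{F}_{u,\la}$. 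The paper closes this last point by an induction (inequality \eqref{eq5.5}) that crucially uses the nesting.

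Second, the Gauss--Green formula (Theorem \ref{thm-3}) and the $|y_k|^{-1/2}$, $|y_k|^{-3/2}$ weights from \eqref{eq9} play no role in the proof of Theorem \ref{thm5-1} itself. They were used upstream---Gauss--Green to prove Lemma \ref{lem-5.1}, and the weight estimates inside Theorem \ref{th2.5} (hence Corollary \ref{coro2.2})---but the argument for Theorem \ref{thm5-1} operates purely at the level of the perimeter functional: lower semicontinuity, submodularity, and the complement identity. There is no ``lower-order Laguerre term'' to absorb in the $L^1$ estimate for $u$; that part of your sketch is a red herring and should be dropped.
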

\begin{proof}
    At first, for a given set $E$, we need to find a function $u\in
    L^{1}(\Omega_1,d\mu_{\al})$ such that
    \begin{equation}\label{eq5.1}
    \mathscr{F}_{u,\la}(E)\leq \mathscr{F}_{u,\la}(F)
    \end{equation}
    holds for every $F$ with either $F\subset E$ or $E\subset F$, then
    Theorem \ref{thm5-1} is proved, i.e. (\ref{eq5.1}) holds for every
    $F\subset \Omega_1$. In fact, by adding the inequality
    (\ref{eq5.1}) corresponding to the test sets $E\cap F$ and $E\cup
    F$,
    $$\begin{cases}
    {P}_{\la}(E,\Omega_1)+\int_{E}u(x)\du\leq {P}_{\la}(E\cap F,\Omega_1)+\int_{E\cap F}u(x)\du,\\
    {P}_{\la}(E,\Omega_1)+\int_{E}u(x)\du\leq
   {P}_{\la}(E\cup F,\Omega_1)+\int_{E\cup F}u(x)\du.
    \end{cases}$$
    Then noting that
    \begin{equation}\label{eq5.3}
    {P}_{\la}(E\cap
    F,\Omega_1)+{P}_{\la}(E\cup F,\Omega_1)\le
    {P}_{\la}(E,\Omega_1)+{P}_{\la}(F,\Omega_1),
    \end{equation}
    we can get
    \begin{align*}
        &2{P}_{\la}(E,\Omega_1)+2\int_{E}u(x)\du\\
        &\leq{P}_{\la}(E\cap F,\Omega_1)+{P}_{\la}(E\cup F,\Omega_1)
        +\int_{E\cap F}u(x)\du+\int_{E\cup F}u(x)\du\\
        &\leq{P}_{\la}(E,\Omega_1)+{P}_{\la}(F,\Omega_1)+\int_{E}u(x)\du+\int_{F}u(x)\du,
    \end{align*}
    that is, (\ref{eq5.1}) holds for arbitrary $F$. Also, if
    (\ref{eq5.1}) holds for $F\subset E$, then   for the set  $F$ such
    that $E\subset F$, i.e. $ \Omega_1\backslash F \subset  \Omega_1\backslash E$,
    \begin{align*}
        &{P}_{\la}(E,\Omega_1)+\int_{E}u(x)\du\\
        &={P}_{\la}(\Omega_1\backslash E, \Omega_1)+\int_{\Omega_1\backslash E}u(x)\du-\int_{\Omega_1\backslash E}u(x)\du+\int_{E}u(x)\du\\
        &\leq{P}_{\la}(\Omega_1\backslash F, \Omega_1)+
        \int_{\Omega_1\backslash F}u(x)\du-\int_{\Omega_1\backslash E}u(x)\du+\int_{E}u(x)\du\\
        &={P}_{\la}(F,\Omega_1)
        +\int_{\Omega_1\backslash  F}u(x)\du-\int_{\Omega_1\backslash  E}u(x)\du+\int_{E}u(x)\du\\
        &=\mathscr{F}_{u,\la}(F)-\int_{F}u(x)\du
        +\int_{\Omega_1\backslash F}u(x)\du-\int_{\Omega_1\backslash  E}u(x)\du\\&\quad+\int_{E}u(x)\du\\
        &=\mathscr{F}_{u,\la}(F)+\int_{F\backslash
        E}u(x)\du-\int_{(\Omega_1\backslash E)/ (\Omega_1\backslash F)}u(x)\du\\
        &=\mathscr{F}_{u,\la}(F),
    \end{align*}
    where we have used the fact that $u$ vanishes outside the set $E$ and Lemma \ref{lem-5.1}.
    Hence, we only need to prove that $u$ defined on $E$ is integrable and (\ref{eq5.1}) holds for any $F\subset E$.

    {\it Step I.} Denote by  $h(\cdot)$   a measurable function
    satisfying $h>0$ on $E$ and $\int_{E}h(x)\du<\infty$, and denote
    by $\Lambda$ the (positive and totally finite) measure:
    \begin{equation*}
    \Lambda(F)=\int_{F}h(x)\du,\ F\subset E.
    \end{equation*}
    It is obvious that  $\Lambda(F)=0$ if and only if $\mu_{\al}(F)=0$. For $\lambda>0$ and $F\subset E$, consider the functional
    $$\mathscr{F}_{\lambda}(F):={P}_{\la
    }(F,\Omega_1)+\lambda\Lambda(E\setminus F).$$
    It is well known that  every minimizing sequence is compact in
    $L^{1}_{loc}(\Omega_1,d\mu_{\al})$ and the functional is
    lower semi-continuous with respect to the same convergence. Hence, we conclude that, for every $\lambda>0$, there is
     a solution $E_{\lambda}$ to the problem:
    $$\mathscr{F}_{\lambda}(F)\rightarrow \text{min},\ F\subset E.$$
    Choose a sequence $\{\lambda_{i}\}$ of positive numbers which is
    strictly increasing to $\infty$  and denote the corresponding
    solutions by $E_{i}\equiv E_{\lambda_{i}}$, so that $\forall i\geq
    1$,
    \begin{equation}\label{eq5.2}
    \mathscr{F}_{\lambda_{i}}(E_{i})\leq \mathscr{F}_{\lambda_{i}}(F),\ \forall\ F\subset E.
    \end{equation}
    Given $i<j$. Let $F=E_{i}\cap E_{j}$. It follows from (\ref{eq5.2}) that
     $$\mathscr{F}_{\lambda_{i}}(E_{i})\leq \mathscr{F}_{\lambda_{i}}(E_{i}\cap E_{j}),
    $$ that is,
    $${P}_{\la}(E_{i},\Omega_1)+\lambda_{i}\Lambda(E\setminus E_{i})
    \leq {P}_{\la}(E_{i}\cap
    E_{j},\Omega_1)+\lambda_{i}\Lambda(E\setminus(E_{i}\cap E_{j})),$$ which
    implies
    $${P}_{\la}(E_{i},\Omega_1)+\lambda_{i}\int_{E\setminus E_{i}}h(x)\du\leq {P}_{\la}(E_{i}\cap E_{j},\Omega_1
    )+\lambda_{i}\int_{E\setminus(E_{i}\cap E_{j})}h(x)\du.$$
    A direct computation gives
    $${P}_{\la}(E_{i},\Omega_1)\leq\lambda_{i}\int_{E_{i}\setminus E_{j}}h(x)\du+ {P}_{\la}(E_{i}\cap E_{j},\Omega_1).$$
    On the other hand, taking $F= E_{i}\cup E_{j}\subset E$ in (\ref{eq5.2}), we can get
    $\mathscr{F}_{\lambda_{j}}(E_{j})\leq \mathscr{F}_{\lambda_{j}}(E_{i}\cup E_{j})$.
    Hence,
    $${P}_{\la}(E_{j},\Omega_1)+\lambda_{j}\int_{E\setminus E_{j}}h(x)\du\leq {P}_{\la}(E_{i}\cup E_{j},\Omega_1)
    +\lambda_{j}\int_{E\setminus(E_{i}\cup E_{j})}h(x)\du,$$
    equivalently,
    $${P}_{\la}(E_{j},\Omega_1)+\lambda_{j}\int_{E_{i}\setminus E_{j}}h(x)\du\leq {P}_{\la}(E_{i}\cup E_{j},\Omega_1)$$
    which implies that
    \begin{align*}
        {P}_{\la}(E_{i},\Omega_1)&+{P}_{\la}(E_{j},\Omega_1)+\lambda_{j}\int_{E_{i}\setminus E_{j}}h(x)\du\\ &\le
        {P}_{\la}(E_{i}\cup E_{j},\Omega_1)+\lambda_{i}\int_{E_{i}\setminus E_{j}}h(x)\du+ {P}_{\la}(E_{i}\cap E_{j},\Omega_1).
    \end{align*}
    Recall that $h>0$. The above estimate, together with (\ref{eq5.3}) and the fact that $\lambda_{i}<\lambda_{j}$, indicates that
    $$(\lambda_{j}-\lambda_{i})\Lambda(E_{i}\setminus E_{j})=(\lambda_{j}-\lambda_{i})\int_{E_{i}\setminus E_{j}}h(x)\du=0,$$
    that is, $E_{i}\subset E_{j}$ and the sequence of minimizers $\{E_{i}\}$ is increasing. On the other hand, letting $F=E$, we get
    $${P}_{\la}(E_{i},\Omega_1)+\lambda_{i}\Lambda(E\setminus E_{i})\leq {P}_{\la}(E,\Omega_1)+
    \lambda_{i}\Lambda(E\setminus
    E)={P}_{\la}(E,\Omega_1)\ \ \forall\ i\geq 1,$$ which
    deduces that $E_{i}$ converges monotonically and in
    $L^{1}_{loc}(\mathbb R_{+}^{d},d\mu_{\al})$ to $E$. Via Lemma \ref{lem 1} (ii), we get
    $$\begin{cases}
    {P}_{\la}(E,\Omega_1)\leq\mathop{\lim\inf}\limits_{i\rightarrow \infty}{P}_{\la}(E_{i},\Omega_1)\leq {P}_{\la}(E,\Omega_1),\\
    {P}_{\la}(E,\Omega_1)\leq\mathop{\lim\inf}\limits_{i\rightarrow
    \infty}{P}_{\la}(E_{i},\Omega_1)\leq\mathop{\lim\sup}\limits_{i\rightarrow
    \infty}{P}_{\la}(E_{i},\Omega_1)\leq
    {P}_{\la}(E),
    \end{cases}$$
    which means
    \begin{equation}\label{eq5.4}
   {P}_{\la}(E,\Omega_1)=\lim_{i\rightarrow
    \infty}{P}_{\la}(E_{i},\Omega_1).
    \end{equation}

    {\it Step II.} Let $\lambda_{0}=0$ and $E_{0}=\emptyset$, and define
    $$u(x)=
    \begin{cases}
    -\lambda_{i} h(x),\ x\in E_{i}\backslash E_{i-1},\ i\geq1,\\
    0,\ \text{otherwise}.
    \end{cases}$$
    Clearly, $u$ is negative almost everywhere on $E$, and
    \begin{align*}
        \int_{\mathbb R_{+}^{d}}|u(x)|\du
        &=\int_{\cup^{\infty}_{i=0}E_{i+1}\backslash E_{i}}|u(x)|\du\\
        &=\sum^{\infty}_{i=0}\int_{E_{i+1}\backslash E_{i}}\lambda_{i+1} h(x)\du\\
        &=\sum^{\infty}_{i=0}\lambda_{i+1}\Lambda(E_{i+1}\backslash E_{i}).
    \end{align*}
    In (\ref{eq5.2}), taking $F=E_{i+1}$, we have
    $${P}_{\la}(E_{i},\Omega_1)+\lambda_{i}\Lambda(E\setminus E_{i})\leq {P}_{\la}(E_{i+1},\Omega_1)+\lambda_{i}\Lambda(E\setminus E_{i+1}),$$
    that is, for every $i\geq 0$,
    $$\lambda_{i}\Lambda(E_{i+1}\backslash E_{i})\leq {P}_{\la}(E_{i+1},\Omega_1)-{P}_{\la}(E_{i},\Omega_1).$$
    Then for sufficiently large $N$,
    \begin{align*}
        \sum^{N}_{i=0}\lambda_{i}\Lambda(E_{i+1}\backslash E_{i})&\leq \sum^{N}_{i=0}\Big[{P}_{\la}(E_{i+1},\Omega_1)-{P}_{\la}(E_{i},\Omega_1
        )\Big]\\&={P}_{\la}(E_{N},\Omega_1)
        -{P}_{\la}(E_{0},\Omega_1)\\&={P}_{\la}(E_{N},\Omega_1).
    \end{align*}
    Letting $N\rightarrow \infty$, (\ref{eq5.4}) indicates that
    \begin{equation*}
    \sum^{\infty}_{i=0}\lambda_{i}\Lambda(E_{i+1}\backslash E_{i})\leq
    {P}_{\la}(E,\Omega_1).
    \end{equation*}
    We make the additional assumption that $0<\lambda_{i+1}-\lambda_{i}\leq c$, $i\geq 0$, where $c$ is a constant independent of $i$.
    Then for any $N>0$,
    \begin{align*}
        \sum^{N}_{i=0}(\lambda_{i+1}-\lambda_{i})\Lambda(E_{i+1}\backslash E_{i})
        &\leq c\sum^{N}_{i=0}\Lambda(E_{i+1}\backslash E_{i})\\
        &=c\sum^{N}_{i=0}\int_{E_{i+1}\backslash E_{i}}h(x)\du\\
        &=c\int_{\cup^{N}_{i=0}(E_{i+1}\backslash E_{i})}h(x)\du,
    \end{align*}
    which gives
    $$\sum^{\infty}_{i=0}(\lambda_{i+1}-\lambda_{i})\Lambda(E_{i+1}\backslash E_{i})\leq c\Lambda(E).$$
    Then
    \begin{align*}
        \int_{\mathbb R_{+}^{d}}|u(x)|\du
        &=\sum^{\infty}_{i=0}\lambda_{i+1}\Lambda(E_{i+1}\backslash E_{i})\\
        &=\sum^{\infty}_{i=0}(\lambda_{i+1}-\lambda_{i})
        \Lambda(E_{i+1}\backslash E_{i})+\sum^{\infty}_{i=0}\lambda_{i}
        \Lambda(E_{i+1}\backslash E_{i})\\
        &\leq c\Lambda(E)+{P}_{\la}(E,\Omega_1)<\infty.
    \end{align*}
    In conclusion, $u\in L^{1}(\mathbb R_{+}^{d},d\mu_{\al})$.

    {\it Step III.} We claim that for every $i\geq 1$ the inequality
    \begin{equation}\label{eq5.5}
    {P}_{\la}(E_{i},\Omega_1)\leq
    {P}_{\la}(F,\Omega_1)+\sum^{i}_{j=1}\lambda_{j}\Lambda((E_{j}\backslash E_{j-1})\backslash F)
    \end{equation}
    holds for any $F\subset E$.

    For $i=1$, $E_{i-1}=E_{0}=\emptyset$. Then (\ref{eq5.5}) becomes
    $${P}_{\la}(E_{1},\Omega_1)\leq {P}_{\la}(F,\Omega_1)+\lambda_{1}\Lambda(E_{1}\backslash F),$$
    which coincides with (\ref{eq5.2}) for $i=1$.

    Now we assume that (\ref{eq5.5}) holds for a fixed $i\geq 1$ and
    every $F\subset E$. Take $F\cap E_{i}$ as a test set. Note that
    $\{E_{j}\}$ is increasing. It is easy to see that $$(E_{j}\backslash
    E_{j-1})\backslash (F\cap E_{i})=(E_{j}\backslash E_{j-1})\backslash
    F.$$ Then
    \begin{align*}
        {P}_{\la}(E_{i},\Omega_1)
        &\leq{P}_{\la}(F\cap E_{i},\Omega_1)+\sum^{i}_{j=1}\lambda_{j}\Lambda((E_{j}\backslash E_{j-1})\backslash (F\cap E_{i}))\\
        &={P}_{\la}(F\cap
        E_{i},\Omega_1)+\sum^{i}_{j=1}\lambda_{j}\Lambda((E_{j}\backslash
        E_{j-1})\backslash F).
    \end{align*}
    On the other hand, $E_{i+1}$ is a minimizer of $\mathscr{F}_{\lambda_{i+1}}$. Hence,
    $$\mathscr{F}_{\lambda_{i+1}}(E_{i+1})\leq \mathscr{F}_{\lambda_{i+1}}(F\cup E_{i}),$$
    and noticing that $$E\backslash E_{i}=(E\backslash E_{i+1})\cup (E_{i+1}\backslash E_{i}),$$ we can get
    $$E\backslash(F\cup E_{i})= ((E\backslash E_{i+1})\backslash F)\cup ((E_{i+1}\backslash E_{i})\backslash F).$$
    This gives
    \begin{align*}
       {P}_{\la}(E_{i+1},\Omega_1)+\lambda_{i+1}\Lambda(E\backslash E_{i+1})
        &\leq{P}_{\la}(F\cup E_{i},\Omega_1)+\lambda_{i+1}\Lambda(E\backslash(F\cup E_{i}))\\
        &\leq{P}_{\la}(F\cup
        E_{i},\Omega_1)+\lambda_{i+1}\Lambda((E\backslash E_{i+1})\backslash
        F)\\ &\quad+\lambda_{i+1}\Lambda((E_{i+1}\backslash E_{i})\backslash F).
    \end{align*}
    Therefore, we obtain that
    \begin{align*}
        &{P}_{\la}(E_{i},\Omega_1)+{P}_{\la}(E_{i+1},\Omega_1)+\lambda_{i+1}\Lambda(E\backslash E_{i+1})\\
        &\quad\leq {P}_{\la}(F\cap E_{i},\Omega_1)+\sum^{i}_{j=1}\lambda_{j}\Lambda((E_{j}\backslash E_{j-1})\backslash F)\\
        &\quad\quad+{P}_{\la}(F\cup E_{i},\Omega_1)+\lambda_{i+1}\Lambda((E\backslash E_{i+1})\backslash F)+\lambda_{i+1}\Lambda((E_{i+1}\backslash E_{i})\backslash F)\\
        &\quad\leq {P}_{\la}(E_{i},\Omega_1)+{P}_{\la}(F,\Omega_1)+\sum^{i+1}_{j=1}\lambda_{j}\Lambda((E_{j}\backslash E_{j-1})\backslash F)+\lambda_{i+1}\Lambda((E\backslash E_{i+1})\backslash F)\\
        &\quad\leq
       {P}_{\la}(E_{i},\Omega_1)+{P}_{\la}(F,\Omega_1)+\sum^{i+1}_{j=1}\lambda_{j}\Lambda((E_{j}\backslash E_{j-1})\backslash F)+\lambda_{i+1}\Lambda(E\backslash E_{i+1}),
    \end{align*}
    that is, (\ref{eq5.5}) holds for $i+1$. Finally,
    \begin{align*}
        {P}_{\la}(E,\Omega_1)
        &=\lim_{i\rightarrow\infty}{P}_{\la}(E_{i},\Omega_1)\\
        &\leq{P}_{\la}(F,\Omega_1)+
        \lim_{i\rightarrow\infty}\sum^{i}_{j=1}\lambda_{j}\Lambda((E_{j}\backslash E_{j-1})\backslash F)\\
        &={P}_{\la}(F,\Omega_1)-\int_{\cup^{\infty}_{j=0}(E_{j}\backslash E_{j-1})\backslash F}u(x)\du\\
        &={P}_{\la}(F,\Omega_1)-\int_{E\backslash F}u(x)\du,
    \end{align*}
    which gives (\ref{eq5.2}).
\end{proof}

\end{document}